
\documentclass[a4paper,11pt,reqno]{amsart}
\usepackage{amsfonts,amsmath,amssymb,amsthm}
\usepackage{graphicx}
\usepackage{fixmath}
\usepackage{hyperref}
\usepackage[english]{babel}

\usepackage{setspace}
\usepackage{subfig}
\usepackage{url}
\usepackage{booktabs}
\usepackage{ifthen}
\usepackage{wrapfig}

\usepackage{enumerate}

\usepackage{tikz}
\usetikzlibrary{calc}
\usetikzlibrary{decorations.pathreplacing}

\usepackage[T2A,T1]{fontenc}


\newtheorem{thm}{Theorem}[section]
\newtheorem{lem}[thm]{Lemma}
\newtheorem{prop}[thm]{Proposition}
\newtheorem{cor}[thm]{Corollary}
\newtheorem{obs}[thm]{Observation}

\newtheorem{defn}[thm]{Definition}

\newtheorem{conj}[thm]{Conjecture}

\newtheorem{claim}[thm]{Claim}
\newtheorem{q}{Question}


\numberwithin{equation}{section}





\begin{document}
 
\thispagestyle{plain}

\title{Improved bounds for relaxed graceful trees}

\author{Christian Barrientos}
\address{Christian Barrientos  \texttt{chr\_barrientos@yahoo.com}}

\author{Elliot Krop}
\address{Elliot Krop  \texttt{ElliotKrop@clayton.edu}}

\address{Department of Mathematics \\
Clayton State University \\
Morrow, GA 30260, USA}

\begin{abstract}
We introduce left and right-layered trees as trees with a specific representation and define the excess of a tree. Applying these ideas, we show a range-relaxed graceful labeling which improves on the upper bound for maximum vertex label given by Van Bussel. For the case when the tree is a lobster of size $m$ and diameter $d$, the labeling produces vertex labels no greater than $\frac{3}{2}m-\frac{1}{2}d$. Furthermore, we show that any lobster $T$ with $m$ edges and diameter $d$ has an edge-relaxed graceful bipartite labeling with at least $\max\{\frac{3m-d+6}{4},\frac{5m+d+15}{8}\}$ of the edge weights distinct, which is an improvement on a bound given by Rosa and \v{S}ir\'{a}\v{n} on the $\alpha$-size of trees, for $d<\frac{m+22}{7}$ and $d>\frac{5m-65}{7}$. We also show that there exists an edge-relaxed graceful labeling (not necessarily bipartite) with at least $\max\left\{\frac{3}{4}m+\frac{d-\nu}{8}+\frac{3}{2},\nu\right\}$ of the edge weights distinct, where $\nu$ is twice the size of a partial matching of $T$. This is an improvement on the best known gracesize bound, for certain values of $\nu$ and $d$. We view these results as a step towards Bermond's conjecture that all lobsters are graceful.
\\[\baselineskip] 
	2010 Mathematics Subject Classification: 05C78
\\[\baselineskip]
	Keywords: graceful labeling, range-relaxed graceful labeling, edge-relaxed graceful labeling, gracesize, graceful tree conjecture, partial matchings
\end{abstract}

\date{\today}

\maketitle

\section{Introduction} 
The graceful tree conjecture (GTC), first formulated in 1966 by Rosa \cite{Rosa}, has played a role as the point of origin for most of the questions and results in graph labeling. At this time, Gallian's survey shows two-thousand-one-hundred-and-twelve references \cite{Gallian} most of which can claim the GTC as its root. 

The question, as with many popular open combinatorial problems, is easy to state. 

Let $T$ be a tree on $n$ vertices. Define a \emph{weight} on an edge as the absolute difference of the labels of its incident vertices. 
\begin{conj}\cite{Rosa}
It is possible to label the vertices of $T$ uniquely from $0$ to $n-1$, so that the set of weights of $T$ is $\{1,2, \dots, n-1\}$.
\end{conj}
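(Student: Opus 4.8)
The plan is to proceed by induction on $n$, the number of vertices, via leaf deletion and relabeling. Root $T$ at a centroid, choose a leaf $\ell$ adjacent to a vertex $v$, and invoke the inductive hypothesis on the tree $T-\ell$, which has $n-1$ vertices and so admits a graceful labeling $f$ with weight set exactly $\{1,\dots,n-2\}$. One then wishes to attach $\ell$, assign it the unused label $n-1$, and produce the single missing weight $n-1$. The obstruction is immediate: the weight of the new edge is $|f(\ell)-f(v)|$, and $f(v)$ is already fixed by the inductive labeling, so there is no freedom to force the value $n-1$. This is exactly why the naive induction collapses and why the conjecture has resisted since Rosa posed it in \cite{Rosa}; any workable argument must relabel globally rather than append locally.

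To regain that global freedom I would route the argument through the relaxed machinery developed in this paper. The idea is to start not from a graceful labeling but from a range-relaxed graceful labeling, whose existence with controlled vertex labels we have already established, and then to drive its \emph{excess} to zero. Here the excess quantifies simultaneously the gap between the largest vertex label $N$ and the target $n-1$, and the set of weights in $\{1,\dots,N\}$ that fail to appear; a labeling is graceful precisely when its excess is $0$. Using the left/right-layered representation of $T$, the key step is to define a local move — swapping two vertex labels within a layer, or transferring a subtree between the left-layer and its mirror right-layer — that preserves the distinctness of all edge weights while strictly decreasing the excess. Iterating excess-reducing moves until none remains would then terminate at a graceful labeling, and the bounds proved earlier (for lobsters, vertex labels no greater than $\tfrac{3}{2}m-\tfrac{1}{2}d$) guarantee that one at least begins with the excess already small.

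The hard part, and the reason I expect this program to be genuinely open rather than merely technical, is proving that an excess-reducing move \textbf{always exists} whenever the excess is positive. Each swap or subtree transfer alters several edge weights at once, so ruling out a post-move collision is a global constraint that does not descend from any single local hypothesis; the number of admissible subtree shapes grows combinatorially with $n$, and there is no evident monovariant guaranteeing that progress is possible at every stage. For thin families the layered structure is sparse enough to control collisions by hand — caterpillars are classically graceful, and lobsters are the content of Bermond's conjecture \cite{Bermond}, toward which our relaxed results are a step — but forcing the excess to $0$ for an arbitrary tree would require a new structural invariant that tames this explosion uniformly. My honest assessment is that a purely inductive or purely local scheme will not suffice, and that the decisive insight must be precisely the one that converts the ``small excess'' guaranteed here into a certificate that some beneficial move is always available.
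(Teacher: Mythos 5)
This statement is the Graceful Tree Conjecture itself; the paper states it only as a conjecture, attributes it to Rosa, and offers no proof of it anywhere in the text. There is therefore no ``paper's own proof'' to compare yours against, and your proposal is not a proof either --- you say so yourself. That honesty is to your credit, but the submission should not be presented as a proof attempt: what you have written is a research program with its decisive step (the existence of an excess-reducing move whenever the excess is positive) explicitly left open. An argument whose key lemma is acknowledged to be unproven establishes nothing, and your opening induction correctly diagnoses its own failure, so the net mathematical content is a description of why the problem is hard rather than a resolution of it.

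There is also a concrete misuse of the paper's machinery. In this paper the excess $ex(T)$ is a fixed combinatorial parameter of a chosen layered representation of the tree --- it counts, level by level, consecutive vertices at distance greater than $2$ together with the intervening siblings of their parents --- and it is not a functional of a labeling. It does not measure ``the gap between the largest vertex label $N$ and the target $n-1$'' nor ``the set of weights that fail to appear,'' and it is not true that a labeling is graceful precisely when the excess is $0$; rather, Theorem~\ref{layered} shows that $ex(T)=0$ is a \emph{sufficient} condition on the tree for the construction to yield a graceful labeling (which is how the caterpillar corollary is recovered). Since $ex(T)$ is determined by the tree and its representation, ``iterating excess-reducing moves on the labeling'' is not a well-defined operation in this framework: no swap of vertex labels changes $ex(T)$. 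If you want to pursue a descent argument of the kind you sketch, you would first need to define a genuine labeling-dependent deficiency functional, prove it vanishes exactly on graceful labelings, and only then face the (open, and in my view essentially equivalent to the GTC) problem of showing a strictly decreasing move always exists.
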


Every tree that accepts the conjectured labeling is known as a \emph{graceful tree}. Though many labeling schemes exist to prove specific families of trees graceful, there has been little progress in proving the conjecture even for a robust class of ``shallow'' trees. More precisely, using a definition of \emph{tree distance} that first appeared in \cite{Morgan}, let $P$ be a longest path in $T$ and call $T$ a $k$-distant tree if all of its vertices are a distance at most $k$ from $P$. Paths ($0$-distant trees) and caterpillars ($1$-distant trees) were shown to be graceful in \cite{Rosa}. However, the conjecture is unknown for any trees with higher tree distance. In fact, the problem for $2$-distant trees, or \emph{lobsters}, is a well-known conjecture of Bermond \cite{Bermond}.

\begin{conj}\cite{Bermond}
Every lobster is graceful.
\end{conj}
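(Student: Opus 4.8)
The statement to be established is Bermond's conjecture in full: every lobster is graceful. This is a longstanding open problem, and I do not expect a short argument; what follows is the route I would pursue and an honest identification of the step where the genuine difficulty lies. The overarching idea is to exploit the layered representation and the notion of excess introduced earlier, recasting ``graceful'' as the vanishing of a single nonnegative integer invariant, and then to force that invariant to zero by a structural induction running along the spine of the lobster.

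First I would fix a bipartition $(A,B)$ of the lobster $T$, record $m = |E(T)|$ and $n = m+1$, and isolate the spine. By the defining property of a lobster, deleting all leaves of $T$ yields a caterpillar $C$, which is graceful by Rosa's theorem \cite{Rosa}; the whole problem is to reattach the deleted leaves while keeping the vertex labels packed into $\{0,\dots,n-1\}$ and the edge weights packed into $\{1,\dots,n-1\}$ simultaneously. I would phrase gracefulness as ``a range-relaxed graceful labeling whose maximum vertex label equals $n-1$,'' because for a tree such a labeling is automatically graceful: $n$ distinct labels inside $\{0,\dots,n-1\}$ must exhaust the range, and the $n-1$ distinct weights inside $\{1,\dots,n-1\}$ must then coincide with $\{1,\dots,n-1\}$. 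In the language of the present paper this says precisely that the target is a labeling of excess zero.

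The constructive core is a zigzag (transfer) labeling read off from a linear ordering of $V(T)$. For caterpillars such an ordering exists because the reduced tree is a single path: sweeping the small labels $0,1,2,\dots$ against the large labels $n-1,n-2,\dots$ produces the weights $n-1, n-2, \dots, 1$ consecutively with no collision and no gap. For a lobster I would build the ordering gadget by gadget, where each spine vertex $v_i$ carries a depth-two broom consisting of its branch vertices and the leaves attached to them. The task is to interleave the two leaf layers of consecutive brooms so that each newly created edge receives exactly a weight currently missing from $\{1,\dots,n-1\}$. Encoding the admissible positions of each leaf as an integer interval turns the interleaving into an assignment problem, which I would resolve greedily along the spine while maintaining the invariant that after processing $v_i$ the set of used labels and the set of realized weights are each a prefix-together-with-suffix of their range with no internal gap.

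The main obstacle is exactly the simultaneous two-sided packing across the second level of branching: a naive zigzag that stays gapless on the vertex labels generically produces either a repeated weight or a skipped weight once the outer leaf layer is inserted, and this is the combinatorial heart of why the graceful tree conjecture resists a clean induction. I would try to tame it by coupling the construction to a maximum matching of $T$---the parameter $\nu$ appearing in the final edge-relaxed theorem above and in \cite{RS}---pairing matched vertices across $(A,B)$ so that each matched edge absorbs one large--small weight, and treating the unmatched leaves as free tokens that fill the residual weights. Lobsters admitting a near-perfect matching ought to yield excess zero almost directly; the residual regime of lobsters carrying many sibling leaves (hence small matching) is where I expect to need a genuinely new idea, most plausibly a local relabeling move that transfers excess between adjacent brooms until none remains. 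Proving that this exchange process terminates at excess exactly zero for every lobster---rather than merely furnishing the \emph{bounds} on the excess obtained in the relaxed theorems of this paper---is the crux on which a complete proof of the conjecture would turn.
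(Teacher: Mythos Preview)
The statement you are asked to prove is Bermond's conjecture, and the paper does not prove it: it is listed as Conjecture~1.2 and remains open. There is therefore no ``paper's own proof'' to compare against. You recognize this yourself in the opening line of your proposal, and what you have written is not a proof but a programme --- a description of a plausible attack together with an honest admission of where it stalls. That is an appropriate response to an open problem, but it should not be presented as a proof proposal.

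On the substance of your sketch: the reduction of gracefulness to ``excess zero'' via the layered representation is exactly the viewpoint the paper adopts, and the paper's own results (Theorems~\ref{layered}, \ref{edge-relaxedbip}, \ref{edge-relaxed}) are precisely the \emph{bounds} on excess and on the number of distinct weights that fall out of this machinery without the missing idea you identify. Your diagnosis of the obstruction --- that a zigzag which stays gapless on vertex labels can generate a repeated or skipped weight once the second leaf layer is inserted --- is accurate and is the reason the paper stops at the relaxed bounds rather than at the conjecture. The matching-based refinement you suggest is already partially exploited in the paper via Proposition~\ref{BHRelaxed} and Theorem~\ref{gracefulshells} (lobsters with perfect or almost-perfect matchings are handled), and your intuition that the hard residual case is ``many sibling leaves, small matching'' is consistent with where those results run out. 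What is genuinely missing, both from your sketch and from the literature, is the local exchange move you allude to in the final paragraph: no one has exhibited a relabeling operation that provably drives the excess to zero for an arbitrary lobster, and until such a move is found and its termination proved, the conjecture stands.
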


Approximate approaches to the GTC were introduced by Golomb \cite{Golomb} in 1972. A particularly natural relaxation of the graceful condition, defined by Van Bussel \cite{VB} is the following: Let $G$ be a graph with vertex set $V$ and edge set $E$, $f(V)\rightarrow \mathbb{N}$ an injective map to $\{0, \dots, m'\}$ for some $m' > m= \left|E\right|$ (producing the vertex labels), and $g:E\rightarrow \mathbb{N}$ an injective map to $\{1, \dots, m''\}$ for some $m''\geq m= \left|E\right|$ defined by $g(uv)=\left|f(u)-f(v)\right|$ (producing the weights of $uv$ under $f$). The map $f$ is called a \emph{range-relaxed labeling}.

The ``best'' bound on the maximum vertex labels in a range-relaxed labeling of trees was given by Van Bussel \cite{VB}.

\begin{thm}\cite{VB}
Every tree $T$ on $m$ edges has a range-relaxed graceful labeling $f$ with vertex labels in the range, $0, \dots, 2m-\mbox{diam}(T)$.
\end{thm}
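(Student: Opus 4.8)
The plan is to build the labeling in two stages: first label a longest path optimally, then greedily extend the labeling to the rest of the tree while carefully controlling the label range.

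First I would fix a diameter path $P = u_0 u_1 \cdots u_d$ of $T$, which has $d+1$ vertices and $d$ edges, and give it the standard graceful zig-zag labeling $f(u_0)=0$, $f(u_1)=d$, $f(u_2)=1$, $f(u_3)=d-1$, and so on. This uses precisely the labels $\{0,1,\dots,d\}$ and realizes every weight in $\{1,2,\dots,d\}$ exactly once, so that after this stage no label below $d+1$ is wasted and no small weight ever needs to be created again. It also seeds both the low end (near $0$) and the high end (near $d$) of the spectrum, which is ultimately what lets the diameter contribute the saving of $d$ in the bound. The remaining $m+1-(d+1)=m-d$ vertices must then be placed using labels drawn from $\{0,\dots,2m-d\}$, of which $m-d$ lie above the path block.

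Next I would root $T$ at $u_0$ and order the off-path vertices $v_1,\dots,v_{m-d}$ by breadth-first search, so that when $v_k$ is processed its neighbor $u$ toward the root is already labeled. I would assign $f(v_k)$ so that the induced weight $g(uv_k)=|f(v_k)-f(u)|$ is a value not yet used as a weight, choosing between the two candidate labels $f(u)+w$ and $f(u)-w$ according to which keeps the label inside $\{0,\dots,2m-d\}$ and distinct from all labels used so far. To make room, I would bias each new vertex toward whichever extreme of the range is less crowded, since a label placed near an end can realize a given weight in only one direction, so that each previously used weight then forbids only one candidate rather than two.

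The heart of the argument --- and the step I expect to be the main obstacle --- is the counting lemma guaranteeing that at every step a legal label exists inside $\{0,\dots,2m-d\}$. A naive tally of forbidden labels (the used labels together with all labels that would repeat an existing weight) grows like $3m$ and overshoots the target range, so the bound $2m-d$ cannot be reached by blind greedy choice. The resolution is to strengthen the inductive hypothesis: I would track not only that the weights so far are distinct, but also that a block of consecutive high labels of controlled size remains available and that the extreme-placement rule halves the contribution of the repeated-weight constraints. Combined with the fact that the $d$ diameter edges were labeled with no wasted label, this should keep the number of admissible labels positive through the final vertex $v_{m-d}$. Verifying this inequality at each step --- and in particular checking that the extremal placements never collide with the already-fixed path labels --- is where the real work lies; once it is established, the resulting $f$ is an injective vertex labeling with injective weights and maximum label at most $2m-d$, which is the claim.
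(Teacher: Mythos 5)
There is a genuine gap, and you have in fact located it yourself: the counting lemma at the heart of the argument is never established. As you note, a naive tally of forbidden labels at a given greedy step is roughly $3m$ (each already-used label kills one candidate, and each already-realized weight kills up to two), which exceeds the $2m-d+1$ labels available, so the greedy scheme as stated does not close. The proposed repair --- ``strengthen the inductive hypothesis,'' keep ``a block of consecutive high labels of controlled size'' free, and ``bias toward the less crowded extreme'' --- is only gestured at: no invariant is stated precisely, no inequality is verified, and it is not clear that the halving of the repeated-weight constraints can be maintained once both ends of the range begin to fill. Since you yourself flag this as ``where the real work lies,'' what you have is a plausible plan rather than a proof.

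For comparison: this statement is quoted from Van Bussel \cite{VB} and is not proved in the present paper; the nearest in-paper analogue is Theorem \ref{layered}, which obtains the sharper range $0,\dots,m+ex(T)$ by an entirely different mechanism. Rather than labeling vertices one at a time and counting collisions, that proof assigns an explicit \emph{block of consecutive labels} to each level of a layered representation, with even levels drawn from the bottom of the range and odd levels from the top, and with the blocks sized to absorb the excess $ex_j$ of each level. Under that scheme the weights along each edge-level are consecutive by construction, so distinctness of weights is automatic and no counting lemma is needed. The lesson is that the saving down to $2m-d$ (or $m+ex(T)$) seems to require this kind of global, structural assignment; a vertex-by-vertex greedy argument naturally lands at ranges like $3m$ unless you can supply the missing invariant, which is precisely the part your proposal leaves open.
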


Van Bussel also asked the following:

\begin{q}\cite{VB}
For a tree $T$ on $m$ edges and $n = m + 1$ vertices, is there any $\varepsilon > 0$ for
which we can guarantee a range-relaxed graceful labeling within the range $(2 - \varepsilon)m$?
\end{q}

Another relaxation of the graceful condition was defined by Rosa and \v{S}ir\'{a}\v{n} \cite{RS}. We first define terms in the language of \cite{VB}. Let $G$ be a graph with vertex set $V$ and edge set $E$, $f(V)\rightarrow \mathbb{N}$ an injective map to $\{0, \dots, m\}$ where $m= \left|E\right|$ (producing the vertex labels), and $g(E)\rightarrow \mathbb{N}$ a map to $\{1, \dots, m\}$ defined by $g(uv)=\left|f(u)-f(v)\right|$ (producing the induced edge weights under $f$). Such a map $f$ is called an \emph{edge-relaxed graceful labeling}. 

\begin{thm}\cite{RS}
Every tree of size $m$ has an edge-relaxed graceful labeling with at least $\frac{5}{7}(m+1)$ distinct weights.
\end{thm}

The \emph{gracesize} of a tree $T$, $gs(T)$, is the largest number of distinct edge weights in any edge-relaxed labeling of $T$. With this notation, the above theorem presents the lower bound $gs(T)\geq \frac{5}{7}(m+1)$.

A labeling $f$ is said to be \emph{bipartite}, if there exists an integer $c$ such that for any edge $uv$, either $f(u)\leq c < f(v)$ or $f(v)\leq c < f(u)$. A bipartite labeling that is graceful is called an $\alpha$\emph{-labeling}. As with the concept of gracesize, the $\alpha$\emph{-size} of a tree $T$, $\alpha(T)$, is the largest number of distinct edge weights in any bipartite labeling of $T$, i.e., $\alpha(T)=\max\{\varepsilon(f)$: $f$ is a bipartite labeling of $T\}$, where $\varepsilon(f)$ is the cardinality of the set of weights induced by $f$ on the edges of $T$.

Rosa and \v{S}ir\'{a}\v{n} \cite{RS} proved that for any tree $T$ with $m$ edges, $\frac{5}{7}(m+1)\leq \alpha(T) \leq \frac{5m+9}{6}$.

It should be noted that the lower bound for gracesize was taken from that for $\alpha$-size, as it was not known how to use non-bipartite labelings to improve this bound.

Several results have come out in the intervening years on the lower bounds for the $\alpha$-size of certain restricted families of trees. For the case of trees with maximum degree $3$, Bonnington and \v{S}ir\'{a}\v{n} \cite{BS} showed that $\alpha(T)\geq \frac{5}{6}(m+1)$. Brankovic, Rosa, and \v{S}ir\'{a}\v{n} \cite{BRS} later improved this bound to $\alpha(T)\geq \left\lfloor\frac{6}{7}(m+1)\right\rfloor-1$. 

A direction with some positive advancement has been for trees that have a perfect matching. In 1999, Broersma and Hoede \cite{BH} showed a surprising equivalence between the graceful labelings and a more restrictive labeling on trees containing a perfect matching.

Let $T$ be a tree of order $n$ with a perfect matching $M$. A graceful labeling of $T$, which additionally satisfies the property that for any edge in $M$, the pair of vertices incident to that edge must have a label sum of $n-1$, is called a \emph{strongly graceful} labeling. For any tree $T$ with a perfect matching $M$, the tree resulting from the contraction of the edges of $M$ is called the \emph{contree} of $T$.

\begin{thm}\cite{BH}\label{BHEquivalence}
Every tree is graceful if and only if every tree containing a perfect matching is strongly graceful.
\end{thm}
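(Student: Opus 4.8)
The plan is to prove the two implications separately, in both cases exploiting a dictionary between graceful labelings of a tree $S$ on $k$ vertices and strongly graceful labelings of a tree $T$ with a perfect matching on $n=2k$ vertices whose contree is $S$. The governing observation, valid for any strongly graceful labeling $f\colon V(T)\to\{0,\dots,n-1\}$, is that each matched pair is forced into the complementary form $\{\ell,\,n-1-\ell\}$ with $\ell\in\{0,\dots,k-1\}$. Its edge therefore has weight $n-1-2\ell$, so the $k$ matching edges carry exactly the odd weights $\{1,3,\dots,n-1\}$, and consequently the remaining $k-1$ contree edges carry exactly the even weights $\{2,4,\dots,n-2\}$. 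Halving the latter gives precisely $\{1,\dots,k-1\}$, which is exactly the weight set a graceful labeling of $S$ must realize.

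For the substantive direction, suppose every tree with a perfect matching is strongly graceful, and let $S$ be an arbitrary tree on $k$ vertices. First I would realize $S$ as a contree: form $T$ by attaching a pendant vertex to each vertex of $S$. Then the pendant edges form a perfect matching $M$, contracting $M$ returns $S$, and $T$ has order $n=2k$. By hypothesis $T$ has a strongly graceful labeling $f$, and by the observation above its matching edges absorb the odd weights while the contree edges carry the even weights $\{2,4,\dots,n-2\}$. I would then define a labeling of $S$ by collapsing each matched pair to a single label in $\{0,\dots,k-1\}$, arranged so that the weight of each contree edge becomes exactly half of its weight in $T$; the distinct even weights then halve to the distinct values $\{1,\dots,k-1\}$, yielding a graceful labeling of $S$.

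For the converse, suppose every tree is graceful and let $T$ be any tree of order $n=2k$ with perfect matching $M$ and contree $S$. By hypothesis $S$ has a graceful labeling $c\colon V(S)\to\{0,\dots,k-1\}$ with weight set $\{1,\dots,k-1\}$. I would lift $c$ by expanding each contree vertex back into its matched pair, assigning one member a ``low'' label in $\{0,\dots,k-1\}$ and the other its complement in $\{k,\dots,n-1\}$, chosen so that the $k$ matching edges receive the odd weights $\{1,3,\dots,n-1\}$ while the $k-1$ contree edges receive the doubled weights $\{2,4,\dots,n-2\}$. Verifying that the two endpoints of each pair can be oriented consistently along the tree, so that the result is a genuine strongly graceful labeling, is the content of this direction.

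The main obstacle in both directions is the same, namely pinning down the correct bijection between the $k$ labels of $S$ and the $k$ complementary pairs of $T$. Sending each pair simply to its smaller label is not adequate, since the induced differences need not halve the $T$-weights; the correct projection is a nontrivial permutation of the pair-values, and the crux is to show that it exists and simultaneously (a) uses each label in $\{0,\dots,k-1\}$ exactly once and (b) forces every contree weight to equal half of a distinct even $T$-weight, hence making the contree weights distinct and equal to $\{1,\dots,k-1\}$. Once this correspondence is established in one direction, it inverts to give the other, so the heart of the argument is a single combinatorial lemma identifying graceful labelings of $S$ with strongly graceful labelings of $T$ through the complementary-pair structure imposed by the perfect matching.
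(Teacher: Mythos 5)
First, a point of comparison: the paper does not actually prove this statement --- it is quoted from Broersma and Hoede \cite{BH}, and the paper only reviews the resulting labeling construction for the case of a caterpillar contree. So your attempt has to be judged against the standard argument rather than against a proof supplied in the text. Your architecture is the right one: the complementary-pair observation (each matched pair carries labels $\{\ell,\,n-1-\ell\}$, the $k$ matching edges therefore absorb exactly the odd weights, and the $k-1$ contree edges absorb exactly the even weights, which halve to $\{1,\dots,k-1\}$) is correct and is indeed the engine of the equivalence, and realizing an arbitrary tree $S$ as a contree by attaching a pendant to every vertex is the standard device for the substantive direction.

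However, as written this is a plan rather than a proof: both steps that carry the actual content are named and then deferred. (1) In the direction ``strongly graceful $\Rightarrow$ graceful'' you need an explicit map $c$ from the pairs to $\{0,\dots,k-1\}$ with $|c(w)-c(w')|$ equal to half the $T$-weight of the contree edge $ww'$; you correctly observe that taking the smaller label $y_w$ of each pair does not work, but you stop at asserting that ``the correct projection is a nontrivial permutation'' exists. It does --- take $c(w)=y_w/2$ for $y_w$ even and $c(w)=k-(y_w+1)/2$ for $y_w$ odd, and note that by parity the $T$-weight of the edge between pairs $w$ and $w'$ is forced to be $|y_w-y_{w'}|$ when $y_w+y_{w'}$ is even and $n-1-y_w-y_{w'}$ when it is odd, after which the halving identity is a two-line check --- but that verification \emph{is} the theorem, and it is absent. (2) In the converse direction you assert that the two endpoints of each pair can be ``oriented consistently,'' i.e.\ that one can assign $2g(w)$ and $n-1-2g(w)$ within each pair so that every non-matching edge joins two $2g$-labelled or two $(n-1-2g)$-labelled vertices (forcing the even weights $2|g(w)-g(w')|$); this holds because the choice propagates along $T$ and a tree has no cycles to create a conflict, but again you only state the obligation. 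You also omit the (short, parity-based) check that the lifted matching-edge weights $|n-1-4g(w)|$ are pairwise distinct odd numbers. None of these gaps is fatal to the approach, but until they are filled the proposal does not establish either implication.
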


Furthermore, the authors proved

\begin{cor}\cite{BH}\label{BHLobsters}
Every tree containing a perfect matching and having a caterpillar for its contree is strongly graceful.
\end{cor}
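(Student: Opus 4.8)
The plan is to lift a graceful labeling of the contree to a strongly graceful labeling of $T$ by a ``doubling'' construction, using the caterpillar hypothesis only to guarantee that the contree is graceful. Since $T$ has a perfect matching $M$, its order is even, say $n=2k$, and $M$ has $k$ edges; the contree $C$ is then a tree on $k$ vertices with $k-1$ edges. Because $C$ is a caterpillar it is graceful by Rosa's result \cite{Rosa}, so I would fix a graceful labeling $g\colon V(C)\to\{0,1,\dots,k-1\}$ whose induced weights are exactly $\{1,2,\dots,k-1\}$. Writing each vertex of $C$ as the contracted matched pair $\{x_v,y_v\}$ of $T$, the goal is to use $g$ to label these pairs.

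First I would record the rigid skeleton forced by strong gracefulness. The labels of $T$ must fill $\{0,1,\dots,2k-1\}$, and every matched pair must consist of two labels summing to $n-1=2k-1$; these complementary pairs are precisely $\{i,2k-1-i\}$ for $i=0,\dots,k-1$, and each such pair already contributes the odd weight $2k-1-2i$. Hence, no matter how the pairs are distributed among the vertices of $C$, the matching edges realize exactly the odd weights $\{1,3,\dots,2k-1\}$, and the whole problem collapses to arranging the $k-1$ non-matching edges (which are in bijection with $E(C)$) so as to realize the even weights $\{2,4,\dots,2k-2\}$.

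With this in mind I would define the labeling by doubling: to the pair $v$ assign the two labels $2g(v)$ and $2k-1-2g(v)$. Since $g$ is a bijection onto $\{0,\dots,k-1\}$, the values $2g(v)$ run through all even labels and the values $2k-1-2g(v)$ through all odd labels, so the resulting $f$ is a bijection onto $\{0,\dots,2k-1\}$ and each pair sums to $2k-1$, as required. If, for a contree edge $uv$, the corresponding edge of $T$ joins the two ``even'' endpoints $2g(u),2g(v)$ (or the two ``odd'' endpoints), its weight is $|2g(u)-2g(v)|=2\,|g(u)-g(v)|$, i.e. twice the corresponding weight in $C$; as $uv$ ranges over $E(C)$ these doubled weights run bijectively through $\{2,4,\dots,2k-2\}$, exactly the even weights we need.

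The hard part, and the only place where real care is needed, is the \emph{consistency} of this scheme: each physical vertex receives a single label, yet a pair may emit non-matching edges from both of its vertices, so I must orient every pair (decide which vertex is ``even'' and which is ``odd'') in such a way that every non-matching edge joins two endpoints of the same type while the two vertices of each matched pair receive opposite types. I would handle this through the graph $H=T-M$: it is a forest on $2k$ vertices with $k-1$ edges, hence has exactly $k+1$ components, and each matching edge joins two distinct components (otherwise $T$ would contain a cycle). Contracting the components turns $M$ into a connected graph on $k+1$ vertices with $k$ edges, that is, a tree, which is therefore properly $2$-colorable. Coloring the components ``even''/``odd'' by this proper $2$-coloring makes each type constant on components (so every non-matching edge joins same-type endpoints) and forces the two ends of each matching edge to have opposite types (so each pair splits correctly). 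Verifying this coloring lemma and then reading off the weights completes the argument; I expect the component/tree bookkeeping to be the main obstacle, whereas the weight computations are routine once the orientation is fixed.
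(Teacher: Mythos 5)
Your proof is correct and follows essentially the same route as the Broersma--Hoede labeling that the paper reviews (and cites rather than reproves): double a graceful labeling $g$ of the caterpillar contree, give each matched pair the complementary labels $2g(v)$ and $n-1-2g(v)$, observe that the matching edges automatically realize the odd weights $\{1,3,\dots,n-1\}$ and the non-matching edges the doubled contree weights $\{2,4,\dots,n-2\}$. Your added value is that you explicitly justify the one step the paper's informal review of the BH labeling glosses over, namely that one can consistently choose which vertex of each matched pair receives the even label; your argument via the quotient of $T$ by the components of $T-M$ being a tree, hence properly $2$-colorable, is sound and complete.
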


Although the following theorem is easily implied by the proof of Theorem \ref{BHEquivalence}, and is an immediate consequence of the above corollary, the authors did not state it. Superdock \cite{Superdock} was the first to mention that connection. Morgan proved the following by an explicit construction.

\begin{thm}\cite{Morgan}\label{Morgan}
All lobsters with perfect matchings are graceful.
\end{thm}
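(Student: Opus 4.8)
The plan is to derive the result as a structural consequence of Corollary \ref{BHLobsters} rather than by the explicit construction Morgan used. Recall that a \emph{strongly graceful} labeling is, by definition, a graceful labeling satisfying an additional sum condition on the matching edges; hence strongly graceful immediately implies graceful. By Corollary \ref{BHLobsters}, it therefore suffices to prove the following purely structural claim: if $T$ is a lobster with a perfect matching $M$, then the contree of $T$ obtained by contracting the edges of $M$ is a caterpillar.

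To prove this claim I would first record the constraints a perfect matching places on $T$. Every leaf must be matched to its unique neighbor, so all leaf edges lie in $M$; moreover, if a vertex had two leaf neighbors, both would have to be matched to it, which is impossible, so every vertex of $T$ has at most one leaf neighbor. Organizing the lobster into its natural layers --- the leaves $L_0$, the support vertices $L_1$ adjacent to them, and the spine $S$ (the path obtained by deleting the leaves of $T$ twice) --- these constraints force a rigid matching. Each vertex of $L_1$ is a leaf of $T-L_0$, so it has exactly one leaf child (at least one, or it would itself lie in $L_0$; at most one, by the two-leaf obstruction) and is matched to it. Every leaf is matched upward to its parent, and each spine vertex is matched either to a (unique) leaf child or to an adjacent spine vertex, since its $L_1$ children are already saturated by their own leaves.

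Next I would carry out the contraction and identify the resulting graph. Contracting the matched leaf--support pairs absorbs each $L_1$ vertex together with its leaf into a single node that remains pendant to the spine, and absorbs each spine-attached leaf into its spine vertex; contracting the matched spine edges merges pairs of consecutive spine vertices. The upshot is that the image of $S$ is again a path, while every vertex formerly in $L_1$ becomes a pendant attached to that path. Thus the contree is a path together with pendant vertices, i.e.\ a caterpillar, and the claim follows. Combined with Corollary \ref{BHLobsters}, this shows $T$ is strongly graceful, hence graceful.

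The main obstacle is the verification in the contraction step that no vertex survives at distance two from the contracted spine: one must check, across all admissible matching configurations (including leaves attached directly to the spine and spine vertices matched internally along $S$), that the two nonspine layers genuinely collapse into pendants and that merging consecutive spine vertices never produces a new branch vertex carrying depth. Once this case analysis is complete the result follows with no further labeling construction required, which is precisely why the statement is an immediate corollary of the Broersma--Hoede framework.
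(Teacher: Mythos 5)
Your proposal is correct and follows essentially the route the paper itself indicates: the paper presents this theorem as an immediate consequence of Corollary \ref{BHLobsters}, and your argument supplies exactly the structural verification that underlies that remark, namely that contracting a perfect matching of a lobster (leaves forced to match upward, spine vertices matched to pendants or along the spine) yields a caterpillar. Morgan's cited proof is a separate explicit construction, but the derivation you give is the one the paper relies on, so no further comparison is needed.
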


Our result, related to lobsters, requires the flexibility of the labeling in the proof of Theorem \ref{BHEquivalence} and Corollary \ref{BHLobsters}. We will call the graceful labelings of lobsters with a perfect matching described in \cite{BH}, the \emph{Broersma-Hoede labeling}, or \emph{BH labeling}. For completeness, we review this labeling in section 2.2.

In this paper, we introduce the concept of the \emph{excess} of a tree and use it to improve the bounds on the ranges for both range-relaxed and edge-relaxed graceful labelings. This allows us to answer Van Bussel's question for lobsters with $\varepsilon = \frac{1}{2}$. In our last result which pertains to lobsters, we use BH labelings to slightly improve the gracesize bound obtained from analyzing the excess.

For basic graph theoretic concepts, we refer the reader to the book \cite{West}.


\section{The Excess of Layered Trees}
Let $T$ be a rooted tree with vertices ordered vertically by distance from the root and the root above all other vertices. For any vertex $v \in V(T)$, let $\gamma(v)$ denote the number of levels in $T$ below $v$, where $v$ has at least one descendant.

Our labeling applies to rooted trees where the root $r$ has been chosen in such a way that $\deg(r)=1$ and $\gamma(r)=d=diam(T)$, i.e. $r$ is a pendant vertex of a maximal path in $T$. We order the vertices within each level according to their degrees and the associated parameter $\gamma$ so that edges do not cross. The level of the root vertex $r$ is denoted $L_0$, and vertices of distance $j>0$ from $r$ are on level $L_j$, represented $j$ levels below $r$. We denote by $u\prec v$ the placement of $u$ to the left of $v$. With this notation, we define the order on each level.

\begin{enumerate}
\item If $u$ and $v$ are siblings of degree one, the order of $u$ and $v$ is arbitrary.
\item If $u$ and $v$ are siblings and $\gamma(u)<\gamma(v)$, then $u \prec v$.
\item If $u$ and $v$ are siblings and $\gamma(u)=\gamma(v)$, and $\deg(u)\geq \deg(v)$, then $u\prec v$.
\item If $u$ and $v$ are siblings and $u\prec v$, and $a$ and $b$ descendants of $u$ and $v$, respectively, on the same level, then $a\prec b$. 
\end{enumerate}

A rooted tree so represented is called a \emph{left-layered tree}.
By this ordering, a path of maximum length is drawn on the right extreme of the picture. In Figure \ref{notbip} we show a left-layered tree on $14$ vertices.

Let $L_j=\{v_i^j:1\leq i\leq n_j\}$ where $n_j=\left|L_j\right|$. We assume that $v_i^j \prec v_{i+1}^j$ for all $1\leq i \leq n_j-1$. Let $i$ be the smallest index such that $dist(v_i^j,v_{i+1}^j)>2$. We define $ex_j$ to be the cardinality of the set $\{v \in L_{j-1}: u\prec v\}$ where $u$ is the parent of $v_i^j$. That is, $ex_j$ counts the number of vertices on level $L_{j-1}$ located on the right side of $u$.

Another way to think of this quantity, which may be helpful, is that in a left-layered tree, $ex_j$ counts the number of consecutive vertices on level $j$ with distance greater than $2$ along with the number of siblings linearly between their parents (aunts and uncles). Define the \emph{excess} of $T$, denoted by $ex(T)$, as
\[ex(T)=\sum_{j=0}^d {ex_j}\]

Notice that if $\left|L_j\right|=1$, $ex_j=0$. If $\left|L_j\right|>1$ and $dist(v_a^j,v_b^j)=2$, for every pair of vertices in $L_j$, $ex_j=0$.

In addition, observe that this parameter not only depends on the left-layered tree, it also depends on the vertex chosen to be the root. In Figure \ref{root} we show this situation by exhibiting two different representations of a left-layered tree of size $32$. In both cases there are $12$ levels. For $1\leq i \leq 11$, $ex_j$ corresponds to the number of black vertices in level $j-1$. We note that for the representation in $(a)$, $ex_0=ex_1=ex_2=ex_3=ex_4=ex_5=0, \, ex_6=1, \, ex_6=1, \, ex_7=2, \, ex_8=3, \, ex_9=1, \, ex_{10}=2,$ and $ex_{11}=2$. Therefore, the excess of the representation in $(a)$ is $11$ while it is $12$ in $(b)$. 

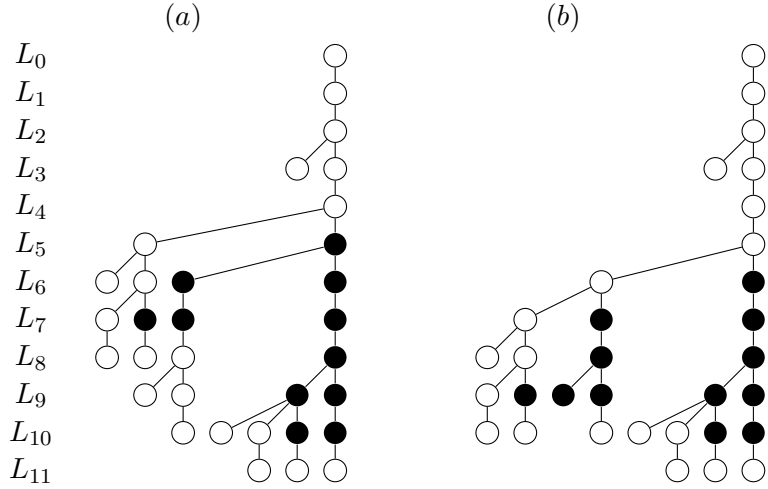
\begin{figure}[ht]
\begin{center}
\begin{tikzpicture}[scale=.5]
\tikzstyle{vert}=[circle,fill=black,inner sep=3pt]
\tikzstyle{overt}=[circle, draw, inner sep=3pt]
  
\node at (-1,12) {$L_0$};
\node at (-1,11) {$L_1$};
\node at (-1,10) {$L_2$};
\node at (-1,9) {$L_3$};
\node at (-1,8) {$L_4$};
\node at (-1,7) {$L_5$};
\node at (-1,6) {$L_6$};
\node at (-1,5) {$L_7$};
\node at (-1,4) {$L_8$};
\node at (-1,3) {$L_9$};
\node at (-1,2) {$L_{10}$};
\node at (-1,1) {$L_{11}$};


\node at (3,13) {$(a)$};

\node[overt, label=left:\tiny{}] (u1) at (7,1) {};
\node[vert, label=left:\tiny{}] (u2) at (7,2) {};
\node[vert, label=left:\tiny{}] (u3) at (7,3) {};
\node[vert, label=left:\tiny{}] (u4) at (7,4) {};
\node[vert, label=left:\tiny{}] (u5) at (7,5) {};
\node[vert, label=left:\tiny{}] (u6) at (7,6) {};
\node[vert, label=left:\tiny{}] (u7) at (7,7) {};
\node[overt, label=left:\tiny{}] (u8) at (7,8) {};
\node[overt, label=left:\tiny{}] (u9) at (7,9) {};
\node[overt, label=left:\tiny{}] (u10) at (7,10) {};
\node[overt, label=left:\tiny{}] (u11) at (7,11) {};
\node[overt, label=left:\tiny{}] (u12) at (7,12) {};

\node[overt, label=left:\tiny{}] (u13) at (6,9) {};

\node[overt, label=left:\tiny{}] (u14) at (2,7) {};

\node[overt, label=left:\tiny{}] (u15) at (1,6) {};
\node[overt, label=left:\tiny{}] (u16) at (2,6) {};
\node[vert, label=left:\tiny{}] (u17) at (3,6) {};

\node[overt, label=left:\tiny{}] (u18) at (1,5) {};
\node[vert, label=left:\tiny{}] (u19) at (2,5) {};
\node[vert, label=left:\tiny{}] (u20) at (3,5) {};

\node[overt, label=left:\tiny{}] (u21) at (1,4) {};
\node[overt, label=left:\tiny{}] (u22) at (2,4) {};
\node[overt, label=left:\tiny{}] (u23) at (3,4) {};

\node[overt, label=left:\tiny{}] (u24) at (2,3) {};
\node[overt, label=left:\tiny{}] (u25) at (3,3) {};
\node[vert, label=left:\tiny{}] (u26) at (6,3) {};

\node[overt, label=left:\tiny{}] (u27) at (3,2) {};
\node[overt, label=left:\tiny{}] (u28) at (4,2) {};
\node[overt, label=left:\tiny{}] (u29) at (5,2) {};
\node[vert, label=left:\tiny{}] (u30) at (6,2) {};

\node[overt, label=left:\tiny{}] (u31) at (5,1) {};
\node[overt, label=left:\tiny{}] (u32) at (6,1) {};

  \draw[color=black] 
(u1)--(u2)--(u3)--(u4)--(u5)--(u6)--(u7)--(u8)--(u9)--(u10)--(u11)--(u12)

(u13)--(u10) (u14)--(u8) (u15)--(u14) (u16)--(u14) (u17)--(u7) (u18)--(u16) (u19)--(u16) (u20)--(u17) (u21)--(u18) (u22)--(u19) (u23)--(u20) (u24)--(u23) (u25)--(u23) (u26)--(u4) (u27)--(u25) (u28)--(u26) (u29)--(u26) (u30)--(u26) (u31)--(u29) (u32)--(u30)

;


\node at (13,13) {$(b)$};

\node[overt, label=left:\tiny{}] (v1) at (18,1) {};
\node[vert, label=left:\tiny{}] (v2) at (18,2) {};
\node[vert, label=left:\tiny{}] (v3) at (18,3) {};
\node[vert, label=left:\tiny{}] (v4) at (18,4) {};
\node[vert, label=left:\tiny{}] (v5) at (18,5) {};
\node[vert, label=left:\tiny{}] (v6) at (18,6) {};
\node[overt, label=left:\tiny{}] (v7) at (18,7) {};
\node[overt, label=left:\tiny{}] (v8) at (18,8) {};
\node[overt, label=left:\tiny{}] (v9) at (18,9) {};
\node[overt, label=left:\tiny{}] (v10) at (18,10) {};
\node[overt, label=left:\tiny{}] (v11) at (18,11) {};
\node[overt, label=left:\tiny{}] (v12) at (18,12) {};

\node[overt, label=left:\tiny{}] (v13) at (17,9) {};

\node[overt, label=left:\tiny{}] (v14) at (14,6) {};

\node[overt, label=left:\tiny{}] (v15) at (12,5) {};
\node[vert, label=left:\tiny{}] (v16) at (14,5) {};

\node[overt, label=left:\tiny{}] (v17) at (11,4) {};
\node[overt, label=left:\tiny{}] (v18) at (12,4) {};
\node[vert, label=left:\tiny{}] (v19) at (14,4) {};

\node[overt, label=left:\tiny{}] (v20) at (11,3) {};
\node[vert, label=left:\tiny{}] (v21) at (12,3) {};
\node[vert, label=left:\tiny{}] (v22) at (13,3) {};
\node[vert, label=left:\tiny{}] (v23) at (14,3) {};
\node[vert, label=left:\tiny{}] (v24) at (17,3) {};

\node[overt, label=left:\tiny{}] (v25) at (11,2) {};
\node[overt, label=left:\tiny{}] (v26) at (12,2) {};
\node[overt, label=left:\tiny{}] (v27) at (14,2) {};
\node[overt, label=left:\tiny{}] (v28) at (15,2) {};
\node[overt, label=left:\tiny{}] (v29) at (16,2) {};
\node[vert, label=left:\tiny{}] (v30) at (17,2) {};

\node[overt, label=left:\tiny{}] (v31) at (16,1) {};
\node[overt, label=left:\tiny{}] (v32) at (17,1) {};

  \draw[color=black] 
 (v1)--(v2)--(v3)--(v4)--(v5)--(v6)--(v7)--(v8)--(v9)--(v10)--(v11)--(v12)

(v13)--(v10) (v14)--(v7) (v15)--(v14) (v16)--(v14) (v17)--(v15) (v18)--(v15) (v19)--(v16) (v20)--(v18) (v21)--(v18) (v22)--(v19) (v23)--(v19) (v24)--(v4) (v25)--(v20) (v26)--(v21) (v27)--(v23) (v28)--(v24) (v29)--(v24) (v30)--(v24) (v31)--(v29) (v32)--(v30)

;

\end{tikzpicture}
\caption{Left-layered trees and their excess}
\label{root}
\end{center}
\end{figure}

We define $ex'_j$ to be the number of consecutive vertices on level $j$ with distance greater than $2$ and \[ex'(T)=\sum_{j=0}^d ex'_j\]

We define a \emph{right-layered} tree representation by the definition of left-layered trees above with rule $(3)$ replaced by the following:\\

\indent\indent $(3')$ If $u$ and $v$ are siblings on the same level and $\gamma(u)=\gamma(v)$, and $\deg(u)\leq \deg(v)$, then $u\prec v$.

The excess of right-layered trees is defined in the same way as for left-layered trees.

\begin{obs}\label{lobstermash}
Notice that for lobsters ($2$-distant trees) 
\begin{align}
ex'(T)=ex(T) 
\end{align}
\end{obs}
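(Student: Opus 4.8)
The plan is to prove the stronger, level-by-level statement that $ex_j = ex'_j$ for every $j$, from which $ex'(T)=ex(T)$ follows by summing over $0\le j\le d$. By definition $ex_j$ counts \emph{all} vertices of $L_{j-1}$ lying to the right of $u$, the parent of $v_i^j$, whereas $ex'_j$ counts only the ``consecutive vertices on level $j$ with distance greater than $2$'', which (via rule (4), since the order of children follows that of their parents) is exactly the number of vertices of $L_{j-1}$ to the right of $u$ that actually possess a child on level $j$. Hence $ex_j\ge ex'_j$ always, and the difference $ex_j-ex'_j$ equals the number of childless ``aunts and uncles'': vertices of $L_{j-1}$ lying strictly to the right of $u$ with no descendant on $L_j$. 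So it suffices to show that in a lobster every such vertex instead lies weakly to the left of $u$.

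First I would record the level structure of $T$ in its left-layered representation. Rooting at $r=w_0$ with $\deg(r)=1$ and $\gamma(r)=d$, the longest path $w_0w_1\cdots w_d$ occupies the right extreme of each level. Using the lobster hypothesis that every vertex lies within distance two of this path, the vertices of $L_{j-1}$ are precisely $w_{j-1}$, the legs attached to $w_{j-2}$, and the feet attached to legs of $w_{j-3}$, while those of $L_j$ are $w_j$, the legs of $w_{j-1}$, and the feet of legs of $w_{j-2}$. The crucial consequence of the distance-two bound is that every foot is a leaf; therefore the only vertices of $L_{j-1}$ that have a descendant on $L_j$ are $w_{j-1}$ together with those legs of $w_{j-2}$ that carry feet.

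Next I would invoke the ordering rules. A leaf has $\gamma=0$ while any vertex with a child has $\gamma\ge 1$, so within each sibling class rule (2) places all childless legs and all feet to the left of the feet-bearing legs and of $w_{j-1}$; rule (4) then propagates this so that, globally on $L_{j-1}$, every leaf sits to the left of every vertex having a child on $L_j$. Since $u$ is the parent of the leftmost maximal same-parent block of $L_j$, it is the leftmost such parent, and $w_{j-1}$ (on the path) is the rightmost vertex of the level; hence every vertex strictly to the right of $u$ is itself a parent and no childless aunt or uncle is counted. This gives $ex_j=ex'_j$. I would close by disposing of the degenerate levels, namely those with $|L_j|\le 1$ or with all pairwise distances equal to $2$, for which both quantities vanish.

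The main obstacle is the bookkeeping in the third step: one must verify that the left-layering genuinely pushes \emph{all} leaves of $L_{j-1}$ to the left of the leftmost feet-bearing leg even across distinct families (the feet descending from legs of $w_{j-3}$ versus the children of $w_{j-2}$), not merely within a single sibling class, and this requires tracking the comparison back through rule (4) along the ancestors on levels $j-2$ and $j-3$. This is exactly where the distance-two (lobster) hypothesis is indispensable: in a $3$-distant tree a vertex two levels below the path could still carry descendants reaching $L_j$, producing genuine aunts and uncles and destroying the equality. Pinning down this global left-to-right comparison is the part I expect to demand the most care.
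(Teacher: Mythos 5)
Your argument is correct. The paper offers no proof of this observation (it is stated with a bare ``notice that''), so there is nothing to compare against; your reduction to the level-by-level claim that every vertex of $L_{j-1}$ lying to the right of the leftmost parent $u$ has a child on $L_j$ is precisely the missing justification, and the delicate cross-family step you flag does go through: in a lobster the feet are forced to be leaves, rule (2) applied to the sibling class on level $j-2$ gives $\gamma(\text{leg of }w_{j-3})\le 1<\gamma(w_{j-2})$, so rule (4) places all feet of $L_{j-1}$ to the left of the children of $w_{j-2}$, and rule (2) within that sibling class places the childless legs before the feet-bearing legs and $w_{j-1}$; hence the non-parents of $L_{j-1}$ form an initial segment of the level and no ``aunts and uncles'' are counted. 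The only cosmetic slip is the assertion that $w_{j-1}$ is always the rightmost vertex of its level: when $j=d$ it is compared with feet-bearing legs by degree under rule (3) and need not come last, but since it is a parent in any case this does not affect the count.
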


Similarly for closer relations, let $s_j$ be the number of consecutive vertices on level $j$ with distance equal to $2$. That is, $s_j=n_j-ex_j-1$. Define the $surplus$ of $T$, denoted by $s(T)$, as
\[s(T)=\sum_{j=3}^d s_j\]

Thus, when $T$ is the tree in Figure \ref{root}$(a)$, $s(T)=9$ and when $T$ is the tree in Figure \ref{root}$(b)$, $s(T)=9$.

\subsection{Range-Relaxed Labelings}

\begin{thm}\label{layered}
Every tree $T$ of size $m$ has a range-relaxed graceful labeling with vertex labels in the range $0, \dots, m+ex(T)$.
\end{thm}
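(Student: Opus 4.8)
The plan is to give an explicit algorithmic labeling of the left-layered tree $T$ and then verify two things: that all induced edge weights are distinct, and that the largest label used is at most $m+ex(T)$. Since $T$ is bipartite with bipartition determined by the parity of the levels $L_0,L_1,\dots,L_d$, I would reserve the small labels $0,1,2,\dots$ for the vertices on even levels and the large labels $\dots,m+ex(T)-1,m+ex(T)$ for the vertices on odd levels, so that every edge joins a small label to a large one. Concretely I maintain a low counter $\ell$ and a high counter $h$, and I assign labels level by level for $j=0,1,\dots,d$, within each level respecting the left-to-right order $v_1^j\prec\cdots\prec v_{n_j}^j$ fixed by the layering rules. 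The root, being the unique vertex of $L_0$, receives label $0$, and the counters are advanced as vertices are consumed.

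The crux of the construction is the local rule that keeps weights distinct while realizing each value $1,2,\dots,m$ exactly once. If two consecutive vertices $v_i^j,v_{i+1}^j$ are siblings then $dist(v_i^j,v_{i+1}^j)=2$, and the rule assigns them consecutive counter values, so the two edges to their common parent receive weights differing by exactly $1$; this is the tight case that consumes label values without waste. When instead $dist(v_i^j,v_{i+1}^j)>2$, or when there are childless vertices (the aunts and uncles) of $L_{j-1}$ lying between the two parents in the $\prec$-order, the counter must jump past certain values to avoid repeating a weight already produced. I would show that the total size of the forced jumps incurred on level $j$ is exactly $ex_j$ — this is precisely the quantity the definition of $ex_j$ was crafted to count, via the parent $u$ of $v_i^j$ and the set $\{v\in L_{j-1}:u\prec v\}$.

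With the rule in hand I would prove, by induction on $j$, that after processing $L_j$ the edge weights produced so far are pairwise distinct and that each counter has advanced by the number of vertices placed plus the accumulated excess. The bipartite split makes every weight a difference of a large label and a small one, and the monotone advance of the counters, together with the distance-$2$ tightness and the forced jumps, forces distinctness; the cross-level transitions, where the roles of $\ell$ and $h$ interchange, are handled by the same bookkeeping. Counting the unused label values, exactly $ex_j$ of them are skipped on level $j$, so in total $\sum_{j=0}^{d}ex_j=ex(T)$ values are skipped; since the $n=m+1$ used labels together with these $ex(T)$ skipped values fill $\{0,1,\dots,m+ex(T)\}$, the maximum label is $m+ex(T)$. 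The main obstacle is the distinctness verification at the level and sibling-group boundaries: one must show that the forced jumps are simultaneously necessary (to avoid collisions with weights already assigned on the previous level and within the current one) and sufficient (no larger jump is ever required), and that their cumulative size equals $ex_j$ exactly rather than merely bounding it. This demands a careful case analysis of the three local configurations — runs of siblings, transitions between distinct parents, and interposed aunts and uncles — and of how the high and low counters interact across consecutive levels; once this local-to-global accounting is nailed down, the range bound follows at once.
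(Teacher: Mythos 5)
Your overall strategy is the same as the paper's: split the vertices by level parity into a low block and a high block, assign consecutive labels level by level in the left-to-right order of the layered representation, and insert a gap of size $ex_j$ so that the total label range is $0,\dots,m+ex(T)$. The problem is that the proposal stops exactly where the proof has to begin. You yourself identify ``the main obstacle'' as the verification that the forced jumps are necessary, sufficient, and total exactly $ex_j$ per level --- and then you defer that verification entirely. That case analysis \emph{is} the content of the theorem; without it you have a labeling scheme and a conjectured bound, not a proof. The paper discharges it by writing down the explicit label interval for each level (for $L_{d-2i}$ the interval $\left[\sum_{j=1}^i(n_{d+2-2j}+ex_{d+2-2j}),\ \sum_{j=1}^i(n_{d+2-2j}+ex_{d+2-2j})+n_{d-2i}-1\right]$, and symmetrically for the odd-parity levels above a threshold $B$), fixing the sweep direction to alternate between parity classes (descending left-to-right on $L_d,L_{d-2},\dots$, ascending on $L_{d-1},L_{d-3},\dots$), and then checking that the largest weight on edge-level $t$ and the smallest weight on edge-level $t+1$ are consecutive integers, so that together with the non-crossing property all weights are distinct and the maximum label telescopes to $m+ex(T)$.

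Two concrete defects in the sketch beyond the deferred verification. First, the claim that the labeling realizes ``each value $1,2,\dots,m$ exactly once'' as edge weights is false: the weights are merely distinct, and the edge joining the label-$0$ vertex to its neighbor carrying the extreme high label has weight exceeding $m$ whenever $ex(T)>0$; the $ex(T)$ skipped values create gaps in the induced weight set just as they do in the label set. Second, you never pin down the orientation of the counters within a level, and you place label $0$ on the root rather than at the deep end of the longest path, which reverses the order in which the blocks of each parity class are consumed relative to the paper. Since $ex_j$ is defined by counting vertices of $L_{j-1}$ lying to the \emph{right} of the parent of the first far-apart pair on $L_j$, the sweep direction is not a free choice: with the wrong orientation the forced jumps are charged against the wrong side of $L_{j-1}$ and need not equal $ex_j$. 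These are precisely the details on which the sharp bound $m+ex(T)$, rather than some weaker $m+O(ex(T))$, depends.
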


To summarize the labeling, we assign the bottom level, $L_d$, of a left-layered tree $T$ by consecutive labels, starting at zero, from right to left.  Next, for the level of distance two from the bottom level, we assign labels consecutively from right to left, starting at the label which is one larger than the last label of the bottom level plus the excess of that level, that is, $n_d+ex_d+1$. We continue this way, going up, for every other level. To label the remaining levels, we start by labeling the topmost level which had not been labeled, and continue with the labeling scheme moving downward by two levels at a time, this time assigning labels consecutively from left to right.

\begin{proof}
We assign labels on the vertices of left-layered tree $T$ with size $m$ and diameter $d$.

The vertices on level $L_d$ receive labels from the interval $[0,n_d-1]$.

For $1\leq i \leq \left\lfloor\frac{d}{2}\right\rfloor$, the vertices on level $L_{d-2i}$ receive the labels from the interval
\[\left[\sum_{j=1}^i(n_{d+2-2j}+ex_{d+2-2j}), \sum_{j=1}^i(n_{d+2-2j}+ex_{d+2-2j})+n_{d-2i}-1\right]\]

We will use the parameter \[B=\sum_{j=0}^{\left\lfloor\frac{d}{2}\right\rfloor}(n_{d-2j}+ex_{d-2j})\] to define the labels on the remaining levels.

For $1\leq i \leq \left\lceil\frac{d}{2}\right\rceil$, the vertices on level $L_{d+1-2i}$ receive labels from the interval
\[\left[B+\sum_{j=i}^{\left\lceil\frac{d}{2}\right\rceil}(n_{d+1-2j}+ex_{d+1-2j})-n_{d+1-2j}, B+\sum_{j=i}^{\left\lceil\frac{d}{2}\right\rceil}(n_{d+1-2j}+ex_{d+1-2j})-1  \right]\]

On levels $L_d,L_{d-2}, L_{d-4}, \dots$, assign labels from left to right in descending order, and on levels $L_{d-1}, L_{d-3}, L_{d-5}, \dots$, assign labels from left to right in ascending order. Recall that $T$ is a left-layered tree and in this representation, the fact that $v_i^j \prec v_{i+1}^j$ implies that they have the same parent or that the parent of $v_i^j$ is located in $L_{j-1}$, to the left of the parent of $v_{i+1}^j$. Hence, the edges connecting vertices of $L_{j-1}$ and $L_j$ do not cross. In addition, the labels of the vertices in $L_{j-1}$ are in ascending (or descending) order, while the vertices of $L_j$ are in descending (or ascending) order. Therefore, there are no repetitions of weights.

Notice that the weights of the edges connecting vertices of $L_{j-1}$ and $L_j$ are in strictly ascending order. To see this, suppose that $t\in\{1,2,\dots d-1\}$; let $x \in L_{t-1}, y,z\in L_t$, and $w \in L_{t+1}$, such that $xy$ has the largest weight on the edges connecting vertices of $L_{t-1}$ and $L_t$, and $zw$ has the smallest weight among edges connecting vertices of $L_t$ and $L_{t+1}$. When $x<y$, $x=w+ex_{t+1}+1$ and $z=y-ex_{t+1}$, implying $z-w=y-ex_{t+1}-(x-ex_{t+1}-1)=y-x+1$. When $x>y$, $w=x+ex_{t+1}+1$ and $z=y+ex_{t+1}$, implying $w-z=x+ex_{t+1}+1-(y+ex_{t+1})=x-y+1$. Therefore, the edges $xy$ and $zw$ have consecutive weights, which means that the weights induced by this assignment are all distinct.

The largest label is
\begin{align*}
&B-1+\sum_{j=1}^{\left\lceil\frac{d}{2}\right\rceil}(n_{d+1-2j}+ex_{d+1-2j})\\
&=\sum_{j=0}^{\left\lfloor\frac{d}{2}\right\rfloor}n_{d-2j}+\sum_{j=1}^{\left\lceil\frac{d}{2}\right\rceil}n_{d+1-2j}-1+\sum_{j=0}^{\left\lfloor\frac{d}{2}\right\rfloor}ex_{d-2j}+\sum_{j=1}^{\left\lceil\frac{d}{2}\right\rceil}ex_{d+1-2j}\\
&=\sum_{j=0}^d n_j-1 + \sum_{j=0}^d ex_j\\
&=m + ex(T).
\end{align*}

Therefore, the assignment is a range-relaxed graceful labeling of $T$.
\end{proof}

In Figure \ref{example} we show a range-relaxed graceful labeling of the tree in Figure \ref{root}$(a)$. The excess of each level is shown by the black vertices. Note that $B=21$.

\begin{figure}[ht]
\begin{center}
\begin{tikzpicture}[scale=.75]
\tikzstyle{vert}=[circle,fill=black,inner sep=3pt]
\tikzstyle{overt}=[circle, draw, inner sep=3pt]



\node[overt, label=right:\tiny{$0$}] (u1) at (7,1) {};
\node[vert, label=right:\tiny{$42$}] (u2) at (7,2) {};
\node[vert, label=right:\tiny{$5$}] (u3) at (7,3) {};
\node[vert, label=right:\tiny{$35$}] (u4) at (7,4) {};
\node[vert, label=right:\tiny{$10$}] (u5) at (7,5) {};
\node[vert, label=right:\tiny{$28$}] (u6) at (7,6) {};
\node[vert, label=right:\tiny{$16$}] (u7) at (7,7) {};
\node[overt, label=right:\tiny{$23$}] (u8) at (7,8) {};
\node[overt, label=right:\tiny{$18$}] (u9) at (7,9) {};
\node[overt, label=right:\tiny{$22$}] (u10) at (7,10) {};
\node[overt, label=right:\tiny{$20$}] (u11) at (7,11) {};
\node[overt, label=right:\tiny{$21$}] (u12) at (7,12) {};

\node[overt, label=left:\tiny{$19$}] (u13) at (6,9) {};

\node[overt, label=left:\tiny{$17$}] (u14) at (2,7) {};

\node[overt, label=left:\tiny{$25$}] (u15) at (1,6) {};
\node[overt, label=left:\tiny{$26$}] (u16) at (2,6) {};
\node[vert, label=above:\tiny{$27$}] (u17) at (3,6) {};

\node[overt, label=left:\tiny{$13$}] (u18) at (1,5) {};
\node[vert, label=left:\tiny{$12$}] (u19) at (2,5) {};
\node[vert, label=right:\tiny{$11$}] (u20) at (3,5) {};

\node[overt, label=left:\tiny{$32$}] (u21) at (1,4) {};
\node[overt, label=left:\tiny{$33$}] (u22) at (2,4) {};
\node[overt, label=right:\tiny{$34$}] (u23) at (3,4) {};

\node[overt, label=left:\tiny{$8$}] (u24) at (2,3) {};
\node[overt, label=right:\tiny{$7$}] (u25) at (3,3) {};
\node[vert, label=left:\tiny{$6$}] (u26) at (6,3) {};

\node[overt, label=left:\tiny{$38$}] (u27) at (3,2) {};
\node[overt, label=left:\tiny{$39$}] (u28) at (4,2) {};
\node[overt, label=left:\tiny{$40$}] (u29) at (5,2) {};
\node[vert, label=left:\tiny{$41$}] (u30) at (6,2) {};

\node[overt, label=left:\tiny{$2$}] (u31) at (5,1) {};
\node[overt, label=left:\tiny{$1$}] (u32) at (6,1) {};

  \draw[color=black] 
(u1)--(u2)--(u3)--(u4)--(u5)--(u6)--(u7)--(u8)--(u9)--(u10)--(u11)--(u12)

(u13)--(u10) (u14)--(u8) (u15)--(u14) (u16)--(u14) (u17)--(u7) (u18)--(u16) (u19)--(u16) (u20)--(u17) (u21)--(u18) (u22)--(u19) (u23)--(u20) (u24)--(u23) (u25)--(u23) (u26)--(u4) (u27)--(u25) (u28)--(u26) (u29)--(u26) (u30)--(u26) (u31)--(u29) (u32)--(u30)

;

\end{tikzpicture}
\caption{Range-relaxed graceful labeling of a left-layered tree}
\label{example}
\end{center}
\end{figure}
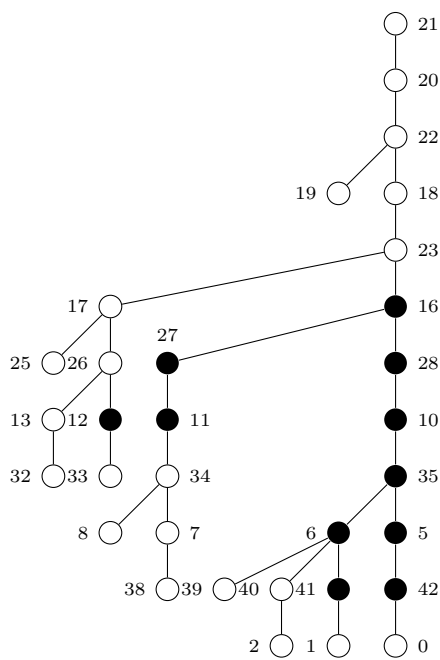

As a consequence of this theorem we have the following corollary, first proven by Rosa in 1966 \cite{Rosa}

\begin{cor}\cite{Rosa}
Caterpillars are graceful.
\end{cor}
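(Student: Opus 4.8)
The plan is to derive the corollary directly from Theorem \ref{layered} by observing that caterpillars are precisely the trees whose excess vanishes. First I would recall the definitions: a caterpillar is a $1$-distant tree, meaning every vertex lies at distance at most one from a longest path $P$. I would root the caterpillar at a degree-one endpoint $r$ of $P$ so that $\gamma(r)=d=\mathrm{diameter}(T)$, as required by the hypothesis of Theorem \ref{layered}, and build the left-layered representation according to rules $(1)$--$(4)$.

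The key step is to show that $ex(T)=0$ for a caterpillar. Since every vertex is within distance one of the spine $P$, each level $L_j$ for $1\le j\le d-1$ contains exactly one spine vertex together with its leaf children, and the only vertices possessing descendants on deeper levels are the spine vertices themselves. In the left-layered ordering, rule $(2)$ places vertices with smaller $\gamma$ to the left, so on each level the leaves (which have $\gamma=0$) precede the single spine vertex, and consecutive vertices sharing the same parent sit at distance exactly $2$. I would argue that there is no index $i$ with $dist(v_i^j,v_{i+1}^j)>2$, because the only way to exceed distance two on a given level is to have two vertices descending from non-adjacent parents, which cannot occur when at most one vertex per level branches further. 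Hence $ex_j=0$ for every $j$, and summing gives $ex(T)=0$.

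With $ex(T)=0$ in hand, Theorem \ref{layered} furnishes a range-relaxed graceful labeling of $T$ with vertex labels in $\{0,1,\dots,m\}$. Since $T$ has $m$ edges and therefore $m+1$ vertices, an injective labeling into the $m+1$ integers $\{0,\dots,m\}$ must be a bijection, so the vertex labels are exactly $0,1,\dots,m$, each used once. The proof of Theorem \ref{layered} already establishes that the induced edge weights are all distinct, and there are exactly $m$ of them lying in $\{1,\dots,m\}$; being $m$ distinct values in a set of size $m$, they constitute precisely $\{1,\dots,m\}$. This is exactly the graceful condition of Rosa's conjecture for $n=m+1$ vertices.

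I expect the only genuine obstacle to be the careful verification that the layered ordering of a caterpillar forces every consecutive gap on each level to be exactly two, so that $ex_j=0$; this is where one must use the $1$-distant structure essentially, ruling out the configurations that would create excess. Everything after that is a counting argument showing that ``no excess plus $m$ distinct weights in $\{1,\dots,m\}$'' upgrades the range-relaxed labeling to an honest graceful labeling.
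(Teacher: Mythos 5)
Your proposal is correct and follows the paper's own (one-line) proof: observe that $ex(C)=0$ for every caterpillar and invoke Theorem \ref{layered}, with the labels then forced to be a bijection onto $\{0,\dots,m\}$ and the $m$ distinct weights forced to be $\{1,\dots,m\}$. The only slip is cosmetic: the leaves appearing on level $L_j$ are children of the spine vertex on level $L_{j-1}$ (so \emph{all} vertices of $L_j$ are siblings), which is exactly why every consecutive gap is $2$ and $ex_j=0$.
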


\begin{proof}
Notice that for any caterpillar $C$, $ex(C)=0$.
\end{proof}

\begin{thm}
If $T$ is a lobster of size $m$ and diameter $d$, then the maximum vertex label, $v_{max} \leq \frac{3}{2}m-\frac{1}{2}d$.
\end{thm}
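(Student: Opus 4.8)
The plan is to derive the result directly from Theorem \ref{layered}, which already supplies a range-relaxed graceful labeling with $v_{max} \le m + ex(T)$. Since the target is $v_{max} \le \frac{3}{2}m - \frac{1}{2}d$, it suffices to show $ex(T) \le \frac{m-d}{2}$. Because $T$ is a lobster, Observation \ref{lobstermash} lets me replace $ex(T)$ by $ex'(T)$, which depends only on the distances between consecutive vertices on each level; this makes it amenable to an exact computation.

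Next I would compute $ex'(T)$ in terms of standard parameters. The key local fact is that two vertices on the same level are at distance exactly $2$ if and only if they are siblings, and at distance greater than $2$ otherwise; moreover the layered ordering (rule $(4)$) keeps the children of each vertex contiguous within a level. Hence on level $j$ the quantity $ex'_j$, the number of consecutive pairs at distance greater than $2$, equals $p_j - 1$, where $p_j$ is the number of distinct parents of $L_j$. Summing over the nonempty levels $1,\dots,d$ (all nonempty, since the longest path meets every level) gives $ex'(T) = \sum_{j=1}^d (p_j - 1) = C - d$, where $C = \sum_j p_j$ is the number of vertices having at least one child. Since the root has degree $1$, it is the unique leaf of $T$ that is not a leaf of the rooted tree, so $C = n - (\ell - 1)$ with $\ell$ the number of leaves, and therefore $ex'(T) = n - \ell + 1 - d = m - \ell - d + 2$.

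With this exact expression, the inequality $ex(T) \le \frac{m-d}{2}$ reduces by routine algebra to $m \le 2\ell + d - 4$, equivalently $I \le \ell + d - 3$, where $I = n - \ell$ is the number of internal vertices. To establish this I would exploit the $2$-distant structure of a lobster. Decompose $T$ into the chosen longest path $P = p_0 \cdots p_d$ together with the maximal subtrees hanging off its internal vertices $p_1,\dots,p_{d-1}$. The path contributes exactly $d-1$ internal vertices and its two endpoints as leaves. Because every vertex lies within distance $2$ of $P$, each such attachment is a star rooted at a child $a$ of some $p_i$: either $a$ is itself a leaf (contributing $1$ leaf and $0$ internal vertices), or $a$ has $t \ge 1$ leaf-children (contributing $1$ internal vertex and $t \ge 1$ leaves). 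In either case an attachment contributes at least as many leaves as internal vertices, so summing over all attachments yields $I - (d-1) \le \ell - 2$, that is $I \le \ell + d - 3$.

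I expect the main obstacle to be the structural step of the last paragraph: arguing that the $2$-distance condition forces every attachment to be a star of height at most $1$, and verifying the boundary behaviour (that both endpoints of $P$ are genuine leaves and that $p_1,\dots,p_{d-1}$ are genuinely internal) so that the termwise comparison holds with no exceptional level. The computation of $ex'(T)$ in the second paragraph is the other delicate point, as it rests on the ``distance $2$ if and only if siblings'' dichotomy and on the contiguity of sibling groups guaranteed by the layered ordering; once these are in place, the remaining manipulations are purely arithmetic.
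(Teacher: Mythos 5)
Your proof is correct, and it establishes the paper's key inequality $ex(T)\le\frac{m-d}{2}$ by a genuinely different route. The paper gets this in two lines from the level-counting identity \eqref{levelsum}, $s(T)+ex'(T)+d=m$, together with the comparison \eqref{lobstex}, $s(T)\ge ex'(T)$, which charges each consecutive distance-$>2$ pair on a level to a distance-$2$ pair of ancestors one level up. You instead compute the excess of a lobster \emph{exactly}: since consecutive vertices on a level are at distance $2$ precisely when they are siblings, and sibling blocks are contiguous in the layered drawing, $ex'_j=p_j-1$ with $p_j$ the number of parents represented on level $j$, whence $ex'(T)=m-\ell-d+2$ where $\ell$ is the number of leaves. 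This is a stronger intermediate fact than the paper proves: it reduces the theorem to the purely structural count $I\le\ell+d-3$ for the internal vertices of a lobster (verified by your decomposition into the long path plus pendant stars of height at most one), and it identifies exactly when the bound is tight, namely when every off-path internal vertex supports exactly one leaf. What the paper's route buys is brevity and continuity: the surplus machinery behind \eqref{levelsum} and \eqref{lobstex} is reused in the edge-relaxed section, so its argument is essentially free once that is set up, whereas your computation of $ex'_j$ leans on the no-crossing (planarity) stipulation of the layered representation rather than on rule $(4)$ alone --- a point you correctly flag, and which is legitimate since the paper imposes that the drawing has no edge crossings. Both arguments are sound.
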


\begin{proof}
Notice that for any $2$-distant tree $T$,
\begin{align}
s(T)+ex'(T)+d=m \label{levelsum}
\end{align}
and since any pair of consecutive vertices on a given level with distance greater than $2$ must be preceded by ancestors one level above, whose distance is $2$, 
\begin{align}
s(T) \geq ex'(T).\label{lobstex}
\end{align}

We apply \ref{levelsum} and \ref{lobstex} to the maximum vertex label given by Theorem \ref{layered}. 
\end{proof}

\subsection{Lobster Shells}

Next, we prepare a structure which, though interesting in its own right, will be used to apply BH labelings to edge-relaxed labelings.

\begin{defn}\label{shell}
A lobster $T$ is a \emph{shell} (sometimes called a \emph{lobster shell}) if there exists a longest path $P$, so that all vertices not on $P$ with a neighbor on $P$ have degree two.

\end{defn}

For any right-layered represented lobster $T$ with longest path $P$, the \emph{shell of }$T$ is found by contracting all but one leaf vertex adjacent to vertices of distance one to $P$, and contracting all leaf vertices adjacent to $P$.

In Figure \ref{exshell} we show an example of a right-layered lobster and its shell. Black vertices are contracted.

\begin{figure}[ht]
\begin{center}
\begin{tikzpicture}[scale=.75]
\tikzstyle{vert}=[circle,fill=black,inner sep=3pt]
\tikzstyle{overt}=[circle, draw, inner sep=3pt]
  
\node[overt, label=right:\tiny{}] (u1) at (6,1) {};
\node[overt, label=right:\tiny{}] (u2) at (6,2) {};
\node[overt, label=right:\tiny{}] (u3) at (6,3) {};
\node[overt, label=right:\tiny{}] (u4) at (6,4) {};
\node[overt, label=right:\tiny{}] (u5) at (6,5) {};
\node[overt, label=right:\tiny{}] (u6) at (6,6) {};

\node[vert, label=right:\tiny{}] (u7) at (4,4) {};
\node[vert, label=right:\tiny{}] (u8) at (5,4) {};

\node[overt, label=right:\tiny{}] (u9) at (1,3) {};
\node[overt, label=right:\tiny{}] (u10) at (4,3) {};

\node[overt, label=right:\tiny{}] (u11) at (1,2) {};
\node[vert, label=right:\tiny{}] (u12) at (2,2) {};
\node[vert, label=right:\tiny{}] (u13) at (3,2) {};
\node[overt, label=right:\tiny{}] (u14) at (4,2) {};
\node[overt, label=right:\tiny{}] (u15) at (5,2) {};

\node[vert, label=right:\tiny{}] (u16) at (4,1) {};
\node[overt, label=right:\tiny{}] (u17) at (5,1) {};

\draw[color=black] 
(u1)--(u2)--(u3)--(u4)--(u5)--(u6) (u7)--(u5) (u8)--(u5) (u9)--(u4) (u10)--(u4) (u11)--(u9) (u12)--(u10) (u13)--(u10) (u14)--(u10) (u15)--(u3) (u16)--(u15) (u17)--(u15)

;

\node[overt, label=right:\tiny{}] (u1) at (12,1) {};
\node[overt, label=right:\tiny{}] (u2) at (12,2) {};
\node[overt, label=right:\tiny{}] (u3) at (12,3) {};
\node[overt, label=right:\tiny{}] (u4) at (12,4) {};
\node[overt, label=right:\tiny{}] (u5) at (12,5) {};
\node[overt, label=right:\tiny{}] (u6) at (12,6) {};

\node[overt, label=right:\tiny{}] (u7) at (9,3) {};
\node[overt, label=right:\tiny{}] (u8) at (10,3) {};

\node[overt, label=right:\tiny{}] (u9) at (9,2) {};
\node[overt, label=right:\tiny{}] (u10) at (10,2) {};
\node[overt, label=right:\tiny{}] (u11) at (11,2) {};

\node[overt, label=right:\tiny{}] (u12) at (11,1) {};

\draw[color=black] 
(u1)--(u2)--(u3)--(u4)--(u5)--(u6) (u7)--(u4) (u8)--(u4) (u9)--(u7) (u10)--(u8) (u11)--(u3) (u12)--(u11)

;

\end{tikzpicture}
\caption{A right-layered lobster and its shell}
\label{exshell}
\end{center}
\end{figure}
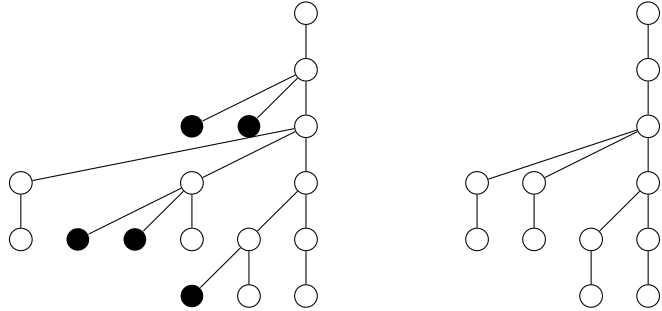

\begin{prop}\label{perfmatch}
For any lobster $T$ of order $n$, the $shell$ of $T$ has a perfect matching if $n$ is even or a $n-1$-matching (a matching covering $n-1$ vertices) if $n$ is odd. 
\end{prop}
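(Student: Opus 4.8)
The plan is to read the structure of the shell straight off Definition~\ref{shell}, exhibit a matching that misses at most one vertex, and then isolate a single parity computation that ties the deficiency back to the order $n$ of $T$. First I would record what survives the two contractions. Fix the longest path $P=p_0p_1\cdots p_d$ used to build the shell, so the spine has $d+1$ vertices. Contracting every leaf adjacent to $P$ deletes all degree-one neighbours of the spine, and contracting all but one leaf hanging from each distance-one vertex leaves every surviving distance-one vertex $u$ with exactly one pendant leaf $w$. Thus each off-spine vertex of the shell lies on a pendant path $p_i-u-w$ with $\deg(u)=2$, which is precisely the shell-defining condition that every off-path neighbour of $P$ has degree two. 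Writing $c$ for the number of such pendant paths, the shell is the spine $P$ together with $c$ pendant copies of the path on two vertices, and so it has $(d+1)+2c$ vertices.

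Next I would build the matching greedily from the leaves inward. For each pendant path $p_i-u-w$ I put the edge $uw$ into the matching; these $c$ edges cover every off-spine vertex and touch no spine vertex. What is left to match is exactly the spine, a path on the $d+1$ vertices $p_0,\dots,p_d$, which I match from one end as $p_0p_1,\,p_2p_3,\dots$. If $d+1$ is even this matches the entire spine and the resulting matching of the shell is perfect; if $d+1$ is odd it covers the spine except for one endpoint, so the matching misses exactly one vertex. In either case the matching omits at most one vertex of the shell, and whether it omits zero or one is decided entirely by the parity of $d+1$, equivalently by the parity of the shell's order $(d+1)+2c$.

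The main obstacle, and essentially the only arithmetic in the argument, is to convert this shell-order parity into the parity of $n$, so that the dichotomy reads off $n$ exactly as stated. To do this I would expand $n$ in the parameters of $T$ before contraction — the spine length $d+1$, the number of distance-one leaves deleted, the number $c$ of retained distance-one vertices, and the total number of distance-two leaves — and then track, contraction by contraction, how the order changes modulo two. The goal of this bookkeeping is to show that an even $n$ forces an even spine (hence the perfect matching constructed above), while an odd $n$ forces the one-vertex deficiency (hence a matching covering all but one vertex, i.e.\ the claimed $(n-1)$-matching). Pinning down this parity relationship between $n$ and the number of contracted vertices is the crux; once it is in hand, the greedy matching of the previous step immediately delivers a perfect matching when $n$ is even and a near-perfect matching when $n$ is odd.
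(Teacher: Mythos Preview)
Your first three paragraphs already contain the entire argument, and they agree with the paper's proof: the shell is a spine $P$ with some number of pendant two-edge branches attached, and one matches each pendant pair and then matches the spine greedily, leaving at most one spine endpoint uncovered. The paper states exactly this structural decomposition and finishes by a one-line induction on the number of branches; your explicit matching is an equally valid way to finish.

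The trouble is your fourth paragraph. You flag as the ``crux'' a parity computation tying the order $n$ of the original lobster $T$ to the parity of the shell's spine, but that computation cannot succeed in general. Take $T$ to be the path $p_0p_1p_2p_3p_4$ with a single extra leaf attached to $p_2$; then $n=6$ is even, the shell (after contracting that one leaf adjacent to $P$) is the path on five vertices, and the best matching of the shell misses one vertex. So an even $n$ does \emph{not} force a perfect matching of the shell. The statement's use of ``$n$'' is loose: note that an ``$(n-1)$-matching'' of the shell is not even meaningful once two or more contractions have been performed, and that the very next proposition in the paper checks the parity of $n-l$ (the shell's own order), not of $n$. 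Read the proposition with $n$ denoting the order of the \emph{shell}; with that reading your construction in steps~1--3 already proves it, since whether the spine matching is perfect is governed precisely by the parity of $(d+1)+2c$. Drop the bookkeeping of step~4 entirely.
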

\begin{proof}
If the shell of $T$ is a path, then the statement is true. Otherwise, the shell of $T$ is a maximal path incident to some number of branches composed of two edges. The statement is easy to show by induction on the number of such branches.
\end{proof}

We now review the labeling shown in \cite{BH}. For any lobster $T$ of even order $n$ with a perfect matching $M$, a BH labeling can be found by contracting the edges of $M$ to form the \emph{contree} $T'$, which is a caterpillar, or $1$-distant tree. For any graceful labeling $f$ of $T'$, label the vertices of $T'$ by $2f$, that is, multiply each vertex label by $2$, and then expand the edges back to form $T$ where only half the vertices are labeled such that the edges of $T'$ correspond to the edges of $T$ not in $M$. To label the remainder of the vertices of $T$, suppose $u$ is a labeled vertex and notice that $u$ is adjacent to an unlabeled vertex $v$ by an edge from $M$. Label $v$ by $n-1-2f(u)$ in such a way that for all the edges of $T$ which are not in $M$, both endpoints are to be labeled by even numbers or odd numbers so that these edges have even weight.

\begin{prop}\label{BHRelaxed}
For any lobster $T$ of order $n$ and non-negative integer $l$, if there exists a right-layered representation of $T$ so that the shell of $T$ can be found by $l$ contractions, then 
\[
gs(T)\geq \left\{\begin{array}{ll}
n-l-1, & \text{ if } n-l \text{ is even}\\
n-l-2, & \text{ if } n-l \text{ is odd}
\end{array}\right.
\]

\end{prop}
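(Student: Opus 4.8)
The plan is to pass to the shell $S$ of $T$, graceful-label $S$ by the Broersma--Hoede construction, and then reinflate $S$ to $T$ by handing the unused labels to the leaves that were contracted away. The point is that the $l$ contractions producing the shell delete only pendant leaves (contracting a leaf edge merges the leaf into its neighbor), so no branch vertices are identified: $V(S)$ is literally a subset of $V(T)$, $S$ has order $n-l$, and $V(T)\setminus V(S)$ consists of exactly $l$ leaves of $T$. Reattaching these leaves adds new edges but never alters the labels on the vertices of $S$, hence never disturbs the weights already induced on $E(S)$. Since a graceful labeling of $S$ realizes all $|E(S)|=n-l-1$ weights distinctly, those distinct weights survive in any extension to $T$, and counting them gives the bound.

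First I would record the structural facts and verify that Corollary~\ref{BHLobsters} applies to $S$. By Definition~\ref{shell} the shell is a spine $P$ carrying some length-two branches $v_i a_i b_i$ and no directly attached leaves. For the perfect matching furnished by Proposition~\ref{perfmatch} (in the even case) I would use the natural one: match each branch along $a_i b_i$ and pair up the spine vertices of $P$. Contracting this matching collapses each branch to a single pendant at $v_i$ and shortens the spine to a path, so the contree of $S$ is a caterpillar. Thus $S$ is a lobster with a perfect matching whose contree is a caterpillar, and Corollary~\ref{BHLobsters} (equivalently Theorem~\ref{Morgan}) yields a BH labeling of $S$.

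In the even case ($n-l$ even), the BH labeling is an injection $V(S)\to\{0,1,\dots,n-l-1\}$ inducing the $n-l-1$ distinct weights $\{1,\dots,n-l-1\}$. I then extend it to $T$ by assigning the unused labels $\{n-l,\dots,n-1\}$ injectively to the $l$ reattached leaves; the total map is an injection into $\{0,\dots,n-1\}=\{0,\dots,m\}$, hence an edge-relaxed labeling of $T$, and it retains at least the $n-l-1$ distinct weights coming from $E(S)$, giving $gs(T)\ge n-l-1$. In the odd case ($n-l$ odd), Proposition~\ref{perfmatch} gives only a near-perfect matching, so I would instead delete a single leaf $w$ of $S$, reducing to a shell $S-w$ of even order $n-l-1$ that still has a perfect matching with caterpillar contree; applying BH to $S-w$ realizes $n-l-2$ distinct weights on $\{0,\dots,n-l-2\}$, and assigning the remaining $l+1$ labels $\{n-l-1,\dots,n-1\}$ to $w$ and the $l$ reattached leaves yields an edge-relaxed labeling of $T$ with at least $n-l-2$ distinct weights, so $gs(T)\ge n-l-2$.

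The main obstacle is the odd case: one must choose the deficiency so that the Broersma--Hoede machinery still applies. Concretely, I expect to argue from the spine/branch structure of $S$ that the uncovered vertex can always be taken to be a leaf---either an outermost branch-leaf $b_1$ (re-matching that branch as $v_1 a_1$) or a branch-free spine endpoint---and that removing it preserves both the existence of a perfect matching and the caterpillar-contree condition required by Corollary~\ref{BHLobsters}. Once the shell's contree is identified as a caterpillar, the even case and all of the label-bookkeeping are routine.
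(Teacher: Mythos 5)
Your proposal is correct and follows essentially the same route as the paper: contract to the shell, invoke Proposition~\ref{perfmatch} together with Theorem~\ref{Morgan} (via the BH labeling) to gracefully label the shell, re-expand and give the leftover large labels to the reattached leaves, and count the surviving distinct weights, handling the odd case by removing one further leaf. Your bookkeeping of the spare label set $\{n-l,\dots,n-1\}$ and your explicit concern that the uncovered vertex in the near-perfect matching be a leaf are, if anything, slightly more careful than the paper's version of the same argument.
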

\begin{proof}
Perform $l$ contractions to produce the $shell$ of $T$, $S(T)$, and notice by Proposition \ref{perfmatch} that if $n-l$ is even, then $S(T)$ contains a perfect matching. By Theorem \ref{Morgan}, $S(T)$ accepts a graceful labeling. Expanding the previously contracted edges and labeling the new vertices uniquely by labels from the set $\{n-l+1, \dots, n\}$, we create at most $l$ repetitions of edge weights. If $n-l$ is odd, then $S(T)$ contains an $n-1$-matching. Call the graph obtained by performing one additional contraction on a leaf of $S(T)$ , $S'(T)$. By Theorem \ref{Morgan}, $S'(T)$ accepts a graceful labeling. Expanding the previously contracted edges and labeling the new vertices uniquely by labels from the set $\{n-l-2, \dots, n\}$, we create at most $l+1$ repetitions of edge weights.
\end{proof}

We say that a tree $T$ is \emph{pretty graceful} if there exists a vertex $v$ such that the contraction $T\backslash v$ is graceful, which leads us to the following amusing conclusion.

\begin{cor}
Lobster shells are pretty graceful.
\end{cor}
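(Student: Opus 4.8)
The plan is to reduce to Morgan's theorem (Theorem \ref{Morgan}) by exhibiting, for a lobster shell $T$ of order $n$, either a perfect matching on $T$ itself or a single vertex whose contraction produces a lobster carrying a perfect matching. The first step is to observe that a lobster shell is its own shell: every vertex at distance one from the longest path $P$ already has degree two (so it bears exactly one leaf), and no leaf hangs directly off $P$, so the contraction operation of Definition \ref{shell} contracts nothing. Hence Proposition \ref{perfmatch} applies to $T$ directly, giving $T$ a perfect matching when $n$ is even and an $(n-1)$-matching when $n$ is odd.

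The substantive case is $n$ odd, and here I would arrange the single contraction carefully. Writing $P = p_1 \cdots p_k$, I note that because $P$ is a longest path, no length-two branch can be attached at $p_1$ or $p_k$ (such a branch would lengthen $P$), so both endpoints are genuine leaves. Matching the even path $p_2 \cdots p_k$ internally and matching each branch along its terminal edge leaves exactly $p_1$ exposed; contracting the leaf $p_1$ (equivalently, deleting it) then yields a lobster $T \backslash p_1$ of even order $n-1$ on which this very matching is now perfect. By Theorem \ref{Morgan}, $T \backslash p_1$ is graceful, so $T$ is pretty graceful. This is precisely the ``one additional contraction on a leaf'' used to form $S'(T)$ in the proof of Proposition \ref{BHRelaxed}, now read as the witness vertex in the definition of pretty graceful.

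For $n$ even, the matching supplied by Proposition \ref{perfmatch} is already perfect, so $T$ is a lobster with a perfect matching and is graceful outright by Theorem \ref{Morgan}; I would record the even case as subsumed by this stronger conclusion.

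The step I expect to be the main obstacle is controlling the odd-case contraction so that the result is simultaneously a tree and equipped with an honest perfect matching. Deleting an arbitrary unmatched vertex can disconnect $T$ (if that vertex is an interior path vertex), so the matching of Proposition \ref{perfmatch} must be chosen with its unique exposed vertex at a path endpoint; this is the device that keeps $T \backslash p_1$ connected and a lobster while promoting the $(n-1)$-matching to a perfect one. The remaining delicate point is purely definitional: in the even case one must reconcile the literal requirement that some \emph{contraction} be graceful with the fact that $T$ itself is graceful, since no single edge-contraction of an even-order tree can again have even order and hence a perfect matching. I would resolve this by reading a graceful tree as a (degenerate) instance of a pretty graceful one, which is consistent with the ``amusing observation'' framing of the corollary.
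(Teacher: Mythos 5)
Your argument is correct and is essentially the paper's own: the corollary is offered as an immediate consequence of Proposition \ref{BHRelaxed} with $l=0$, i.e., exactly the reduction via Proposition \ref{perfmatch} and Theorem \ref{Morgan} that you carry out, and your care in choosing the matching so that the exposed vertex is a path endpoint only makes explicit what the paper leaves implicit. Your handling of the even case (treating a graceful tree as a degenerate instance of a pretty graceful one) matches the paper's informal intent for this ``amusing observation.''
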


However, by more careful analysis, we can do a little better. We shall use the following result of Burzio and Ferrarese \cite{BF}, as stated by Superdock \cite{Superdock}.

\begin{defn}\cite{SZ}
Let $S$ and $T$ be trees and let $u,v$ be vertices of $S$ and $T$, respectively. Replace each vertex of $S$, other than the exceptional vertex $u$, by a copy of $T$ by identifying each vertex of $S$ with the vertex corresponding to $v$ in the distinct copy of $T$. Denote the resulting tree by $S\Delta_{+1}T$.
\end{defn}

\begin{thm}\cite{SZ}
If $S$ of order $n_S$ and $T$ are trees with graceful labelings $f$ and $g$, where $f(u)=n_S-1$ and $g(v)=0$, then $S\Delta_{+1}T$ is graceful.
\end{thm}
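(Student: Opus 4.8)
The plan is to build an explicit graceful labeling $h$ of $S\Delta_{+1}T$ directly from $f$ and $g$. Since $S\Delta_{+1}T$ has $1+(n_S-1)n_T$ vertices, and hence $(n_S-1)n_T$ edges, I must produce a bijection $h$ from the vertex set onto $\{0,1,\dots,(n_S-1)n_T\}$ whose induced weights are exactly $\{1,2,\dots,(n_S-1)n_T\}$. I would index the $n_S-1$ copies of $T$ by the label $i=f(x)$ of the vertex $x$ of $S$ that each replaces (so $i$ runs over $0,\dots,n_S-2$), and let the exceptional vertex $u$, which by hypothesis carries the top label $f(u)=n_S-1$ in $S$, remain a single vertex receiving the global top label $(n_S-1)n_T$. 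Within the copy of rank $i$ I would place a reflected and shifted image of the graceful labeling $g$, the size of the shift depending on $i$; the condition $g(v)=0$ forces the identification vertex $v$ of every copy to sit at an extreme of its own block, which is what lets the blocks be assembled with no overlap.

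First I would record the edge partition that the weights must respect. The edges of $S\Delta_{+1}T$ split into the $(n_S-1)(n_T-1)$ \emph{copy-internal} edges, one for each edge of each copy of $T$, and the $n_S-1$ \emph{connecting} edges, one for each edge of $S$ (an edge of $S$ between two non-exceptional vertices becomes an edge between the two corresponding identification vertices, and an edge of $S$ at $u$ becomes an edge from $u$ to an identification vertex). My target is to make the copy-internal edges realize the bottom band $\{1,\dots,(n_S-1)(n_T-1)\}$ and the connecting edges the top band $\{(n_S-1)(n_T-1)+1,\dots,(n_S-1)n_T\}$. For the connecting edges this is the clean part: because $g(v)=0$ places every identification vertex at a common relative position in its copy, an edge of $S$ of weight $s$ induces a product weight lying above $(n_S-1)(n_T-1)$ by an amount governed by $s$, and since $f$ is graceful these $s$ run over exactly $1,\dots,n_S-1$, so the connecting edges fill the top band bijectively. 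Verifying that $h$ is a bijection onto $\{0,\dots,(n_S-1)n_T\}$ is then bookkeeping: the copies' label sets are pairwise disjoint and, together with $u$, exhaust the range, and this is precisely where $g(v)=0$ and $f(u)=n_S-1$ are used.

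The heart of the matter, and the step I expect to be the main obstacle, is the copy-internal edges. Within a single copy every label is a function of $g$ alone, so a mere translate of $g$ leaves that copy's weight spectrum equal to $\{1,\dots,n_T-1\}$; if all copies were translates of $g$ then the weight $1$ (and every small weight) would be repeated $n_S-1$ times and $h$ would be nowhere near graceful. The shift attached to rank $i$ must therefore alter the weight spectrum from copy to copy, so that the copy of rank $i$ contributes exactly the sub-band $\{i(n_T-1)+1,\dots,(i+1)(n_T-1)\}$ (possibly after reversing the direction in which the ranks are read), and as $i$ ranges over $0,\dots,n_S-2$ these sub-bands tile the bottom band. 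I would accomplish this with a bipartition-respecting shift: regarding $T$ as the bipartite graph it is, I add a multiple of $n_T-1$ determined by $i$ to the labels of one colour class while leaving the other class fixed, so that every edge of that copy has its weight raised by the same amount above its value under $g$. The delicate points are to choose the colour classes and the reflections consistently so that the shifted labels stay injective, remain disjoint across copies, and do not collide with the connecting weights at the seam between the two bands; controlling the signs so that \emph{every} edge of a copy is shifted by a common amount is exactly where the structure of a graceful (and, in the cleanest case, an $\alpha$-type) labeling of $T$, together with the normalizations $g(v)=0$ and $f(u)=n_S-1$, must be exploited. Once the tiling of the two bands is established with no repetition, $h$ is graceful and the theorem follows.
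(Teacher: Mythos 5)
Your proposal does not match the construction the paper records (the paper does not reprove this result; it cites \cite{SZ} and exhibits the labeling $g_i(x)=in_T+g(x)$ for $x\in A$, $g_i(x)=(n_S-i-2)n_T+g(x)$ for $x\in B$, with $u\mapsto(n_S-1)n_T$), and as it stands it has a genuine gap exactly at the point you flag as ``the heart of the matter.'' Your target weight partition --- connecting edges filling the top band $\{(n_S-1)(n_T-1)+1,\dots,(n_S-1)n_T\}$ and the copy of rank $i$ contributing the contiguous sub-band $\{i(n_T-1)+1,\dots,(i+1)(n_T-1)\}$ --- is not achievable for a general graceful $g$. If you shift one colour class of a copy by $c$ and leave the other fixed, an edge $xy$ with $x\in A$, $y\in B$ of original weight $w=|g(x)-g(y)|$ acquires weight $c+w$ or $c-w$ according to whether $g(y)>g(x)$ or $g(y)<g(x)$; the copy's weight set is therefore $\{c\pm w: w=1,\dots,n_T-1\}$ with signs dictated by $g$, and this is a contiguous interval only when all edges cross the bipartition in the same direction, i.e.\ only when $g$ is an $\alpha$-labeling. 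The theorem assumes only that $g$ is graceful, so a ``one contiguous sub-band per copy'' tiling cannot be forced. (A secondary problem: leaving one colour class literally fixed makes the $A$-labels of all copies coincide, so $h$ is not injective; both classes must receive $i$-dependent shifts, as in the displayed $g_i$.)

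The missing idea is a pairing of copies rather than a copy-by-copy tiling. Writing $k=n_S-2i-2$, the internal weights of copy $i$ under $g_i$ are $|kn_T+\delta_e|$ where $\delta_e=g(y)-g(x)$, and the partner copy $i'=n_S-2-i$ has coefficient $-k$, hence weights $|kn_T-\delta_e|$. Whatever the signs of the $\delta_e$, the two copies jointly realize every value in $\{(k-1)n_T+1,\dots,(k+1)n_T-1\}\setminus\{kn_T\}$ exactly once; the connecting edges then contribute precisely the multiples $n_T|f(x)-f(y)|=n_T,2n_T,\dots,(n_S-1)n_T$ (interleaved with the internal bands, not sitting above them). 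This symmetric-pair cancellation is what lets the construction dispense with any $\alpha$-labeling hypothesis on $T$, and it is the step your proposal would need to supply.
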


Suppose $S$ and $T$ have orders $n_S$ and $n_T$, respectively, with graceful labelings $f$ and $g$. The construction used to prove the above theorem requires $n_S-1$ copies of $T$, each substituted for a vertex of $S$ other than $u$. Let $(A,B)$ be the natural bipartition of $T$ with $v\in A$. The labeling function follows, other than the label on $u$, which is $(n_S-1)n_T$.
  \[ g_i(x) = \left\{\begin{array}{ll}
    in_T+g(x), & \text{ if } x\in A\\
    (n_S - i -2)n_T+g(x), & \text{ if } x \in B\\
      \end{array}\right. \]

Burzio and Ferrarese  \cite{BF} improved this method and called it the \emph{generalized $\Delta_{+1}$} construction by noticing that for any two adjacent vertices of $S$ into which we substitute copies of $T$, we may connect two such copies of $T$ by an edge between {\bf any} two vertices that correspond to the same vertex in each copy of $T$. The exceptional vertex $u$ must still be adjacent to the vertices corresponding to the fixed vertex $v$ of $T$.


\begin{thm}\label{gracefulshells}
Lobster shells are graceful.
\end{thm}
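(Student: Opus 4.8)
The plan is to split on the parity of $|V(L)|$. I first fix a longest path $P=p_0p_1\cdots p_k$ realizing $L$ as a shell in the sense of Definition~\ref{shell}; since $L$ is a lobster, every vertex off $P$ adjacent to $P$ has degree two and is therefore the midpoint of a \emph{leg} $p_i-w_i-x_i$, and no leg can sit at $p_1$ or $p_{k-1}$ without producing a path longer than $P$. If $t$ is the number of legs then $|V(L)|=(k+1)+2t$, so the order of $L$ has the same parity as the spine length $k+1$. When $|V(L)|$ is even the spine path has a perfect matching, whose union with the leg-tip edges $\{w_ix_i\}$ is a perfect matching of $L$; as $L$ is then a lobster with a perfect matching, Theorem~\ref{Morgan} already shows it is graceful (equivalently, contracting this matching gives a caterpillar contree, so Corollary~\ref{BHLobsters} applies).

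The work is in the odd case, where $L$ has no perfect matching. Here I would present $L$ as a generalized $\Delta_{+1}$ product. Match all leg-tip edges $w_ix_i$ and match the spine from $p_1$ onward, namely $\{p_1p_2,\,p_3p_4,\,\dots,\,p_{k-1}p_k\}$, leaving exactly the leaf $u:=p_0$ uncovered. Contracting this near-perfect matching collapses the spine to a path $u-P_1-P_2-\cdots-P_{k/2}$, where $P_j=\{p_{2j-1},p_{2j}\}$, and turns each leg into a single pendant vertex attached to the $P_j$ containing its root; hence the contree $S$ is a caterpillar in which $u$ is an endpoint of the spine, and each non-exceptional vertex of $S$ arises by contracting a copy of the single edge $P_2=\{v,w\}$.

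I would then verify that the generalized $\Delta_{+1}$ construction of Burzio and Ferrarese~\cite{BF}, applied to this $S$ with exceptional vertex $u$ and $T=P_2$, reproduces $L$ exactly. Identifying the two vertices of each contracted pair with $v$ and $w$, the edge $up_1$ forces $u$ to meet the $v$-vertex of the copy at $P_1$; the spine edges $p_{2j}p_{2j+1}$ are then routed alternately along the $v$- and $w$-rails, each individual cross edge joining vertices of the same type as the construction requires, and each leg edge $p_iw_i$ is routed so that $w_i$ matches the type of $p_i$. This realizes $L=S\,\Delta_{+1}\,P_2$ through the permitted connections. Since $P_2$ has the graceful labeling $v\mapsto0,\ w\mapsto1$ with $g(v)=0$, the hypotheses of the $\Delta_{+1}$ theorem of \cite{SZ} (as generalized in \cite{BF}) are met as soon as $S$ carries a graceful labeling $f$ with $f(u)=n_S-1$, and the conclusion is that $L$ is graceful.

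I expect the genuine obstacle to be this last requirement, not the rail bookkeeping (which is routine once one checks that each copy receives its two spine incidences on its two distinct vertices, so that no single-vertex conflict arises). What must be secured is a graceful labeling of the caterpillar $S$ placing the maximum label $n_S-1$ on the prescribed spine-endpoint $u$. I would isolate this as a short lemma: every caterpillar admits a graceful (indeed $\alpha$-) labeling carrying $n_S-1$ at a chosen end of its spine, obtained from the standard zigzag labeling read from the appropriate end, exactly as for paths and stars. Should a direct argument prove awkward, the freedom to choose which spine-end of $L$ is left uncovered gives a second lever, since either endpoint $p_0$ or $p_k$ yields an admissible exceptional vertex.
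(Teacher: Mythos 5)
Your proposal is correct and follows essentially the same route as the paper: handle the perfect-matching case via Theorem~\ref{Morgan}, and in the odd case take a near-perfect matching missing an endpoint $u$ of $P$, contract it to a caterpillar, and apply the generalized $\Delta_{+1}$ construction of \cite{BF} with exceptional vertex $u$ and $T=P_2$. Your explicit description of the matching, the rail bookkeeping, and the isolated lemma on placing the label $n_S-1$ at a chosen spine-end of the caterpillar are all details the paper leaves implicit, but the argument is the same.
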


\begin{proof}
By Proposition \ref{perfmatch}, either a shell has a perfect matching or an almost perfect matching. Suppose the latter. Let $P$ be a path of diameter length. By induction on the number of vertices, it is easy to show that there exists an almost perfect matching $M$ which does not cover an end vertex $u$ of $P$. However, this means we can apply the generalized $\Delta_{+1}$ construction of \cite{BF} with $u$ as the exceptional vertex, the caterpillar formed by contracting the edges of $M$ as $S$ (labeled gracefuly with largest label on $u$), and $P_2$ as $T$.
\end{proof}

An alternate simple argument, which avoids the $\Delta_{+1}$ construction, provided by an anonymous referee, is as follows:

If a lobster shell has a perfect matching, then the result is known, thus assume otherwise. By definition of a lobster shell, an endpoint of $P$ must be a pendant vertex on the shell. Say $u$ is such a vertex. If we delete $u$, then we get a shell with a perfect matching, say $M$. After contracting edges of $M$, we get a caterpillar where the neighbor $v$ (say) of $u$ is an endpoint of the spine. It is then possible to find a graceful labeling of the caterpillar where $v$ is labeled by $0$. Then the label of $u$ becomes automatic in the shell. 

\medskip

Rosa and \v{S}ir\'{a}\v{n} \cite{RS} called a tree a $m$\emph{-comet} if its vertex set $V$ admits a bipartition $(A,B)$ with $A=\{u, w_1,w_2, \dots, w_m\}$ and $B=\{v_1,\dots, v_m\}$ such that $E=\{u_iv_i,v_iw_i:1\leq i \leq m\}$. The vertex $u$ will be called the \emph{central} vertex of the $m$-comet $T$. For a given value of $i\in\{1,2, \dots, m\}$, the $u-w_i$ path is called a \emph{ray} of $T$. A tree $T=T_{s,t}'$ is called a \emph{broken comet} or \emph{stardust}, if it can be obtained from an $s$-comet by attaching $t$ pendant edges to the central vertex of the comet. For any pair of disjoint trees $T_1$ and $T_2$ with distinguished vertices $u_1$ and $u_2$, we write $T_1 \circ T_2$ to denote the tree obtained by identifying $u_1$ with $u_2$, and call it a \emph{vertex amalgamation} of $T_1$ and $T_2$.

Recall that a \emph{bipartite labeling} of a tree $T$ of size $m$ is a bijection $f:V\rightarrow \{0,1,\dots, m\}$ such that there is an integer $\lambda$ such that if $f(u)\leq \lambda < f(v)$, then $u$ and $v$ belong to different sets of the bipartition $(A,B)$ of $V$. For any bipartite labeling of $T$, the \emph{complementary labeling} $\bar{f}$ is defined by $\bar{f}(v)=m-f(v)$ for each vertex $v\in T$. Let $A$ be the part of the bipartition $(A,B)$ of $T$ for which $f^{-1}(0)\in A$. The \emph{reverse labeling} $\hat{f}$ is given by
      \[ \hat{f}(v) = \left\{\begin{array}{ll}
    \left|A\right|-1-f(v), & \text{ if } v\in A\\
    2\left|A\right| + \left|B\right|-1-f(v), & \text{ if } v \in B
    \end{array}\right. \]

We will use the following Lemma and Proposition from \cite{RS}

\begin{lem}\cite{RS}\label{amalgam}
Let $T_1$ and $T_2$ be vertex-disjoint trees with distinguished vertices $u_1\in V(T_1)$ and $u_2\in V(T_2)$. Assume that there are bipartite labelings $f_1$ and $f_2$ of $T_1$ and $T_2$, respectively, such that $f_1(u_1)=f_2(u_2)=0$. Then there exists a bipartite labeling of $T_1 \circ T_2$ such that $\varepsilon(f)\geq \varepsilon(f_1) + \varepsilon(f_2)$. Consequently, for the $\alpha$-size of $T_1 \circ T_2$ we have $\alpha(T_1 \circ T_2)\geq \varepsilon(f_1)+\varepsilon(f_2)$.
\end{lem}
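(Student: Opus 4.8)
The plan is to build the labeling of $T_1\circ T_2$ by placing the two given labelings side by side along the integers, reflecting one of them so that the two distinguished vertices can be identified, and then sliding the ``high'' blocks apart so that the two edge-weight sets become disjoint. Throughout I would use the elementary fact that, since each $f_i$ is an injective bipartite labeling of a tree with $m_i=|E(T_i)|$ edges, its vertex labels are exactly $\{0,1,\dots,m_i\}$; hence the part $A_i$ containing $u_i=f_i^{-1}(0)$ receives the initial segment $\{0,\dots,a_i-1\}$ and the opposite part $B_i$ receives $\{a_i,\dots,m_i\}$, where $a_i=|A_i|$.

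First I would record the two translation invariances I intend to exploit. Adding a constant to every label of $T_i$ leaves all of its edge weights unchanged, while adding a constant only to the labels of $B_i$ (keeping $A_i$ fixed) translates every edge weight of $T_i$ by that same constant, since every edge of $T_i$ runs between $A_i$ and $B_i$; in either case the number of distinct weights is unchanged. The obstacle that these observations must overcome is exactly the point where I expect the real work to lie: both $u_1$ and $u_2$ sit at the bottom of their ranges (label $0$), so they cannot simply be overlaid without either colliding labels or shifting the two parts of a single tree by different amounts and thereby corrupting its weight multiset.

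To resolve this I would replace $f_2$ by its inverse labeling $\hat f_2$, which reverses each part and in particular sends $u_2$ to $a_2-1$, the top of the low block, while sending each weight $w$ of $f_2$ to $(m_2+1)-w$ and so preserving $\varepsilon(f_2)$. I would then assemble $f$ as follows: keep $A_2$ on $\{0,\dots,a_2-1\}$ via $\hat f_2$, so that $u_2\mapsto a_2-1$; translate all of $T_1$ upward by $a_2-1$, so that $u_1\mapsto a_2-1$ as well (identifying $u_1$ with $u_2$) while $T_1$'s weights are untouched, placing $A_1\setminus\{u\}$ on $\{a_2,\dots,a_1+a_2-2\}$ and $B_1$ on $\{a_1+a_2-1,\dots,m_1+a_2-1\}$; and finally translate $B_2$ upward by $m_1$, placing it on $\{m_1+a_2,\dots,m_1+m_2\}$ above everything else.

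It then remains to verify three points, all of which I expect to be routine once the blocks are laid out. First, that the assignment is a bijection onto $\{0,\dots,m_1+m_2\}$: the four blocks above tile this interval. Second, that it is bipartite with threshold $a_1+a_2-2$: every edge of $T_1$ and of $T_2$ joins the merged low part $A_1\cup A_2$ to the merged high part $B_1\cup B_2$, and every neighbour of $u$ lies in $B_1\cup B_2$. Third, that the weight sets separate: the edges of $T_1$ retain their original weights, all at most $m_1$, whereas the edges of $T_2$ receive weights $m_1+m_2+1-w$ lying in $\{m_1+1,\dots,m_1+m_2\}$, so the two ranges are disjoint and $\varepsilon(f)\ge\varepsilon(f_1)+\varepsilon(f_2)$. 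The asserted bound on $\alpha(T_1\circ T_2)$ then follows at once from the definition of $\alpha$-size.
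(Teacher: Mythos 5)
Your construction is correct, and it is essentially the same as the Rosa--\v{S}ir\'{a}\v{n} labeling the paper quotes in the proof of Theorem \ref{stardust}: you apply the inverse labeling to one tree so that the distinguished vertex rises to the top of its low block, translate the other tree to meet it, and push the remaining high block above everything so the two weight ranges $\{1,\dots,m_1\}$ and $\{m_1+1,\dots,m_1+m_2\}$ are disjoint. The only difference is that you invert $f_2$ and translate $T_1$, whereas the paper's $RS$-labeling inverts $f_1$ and translates $T_2$ --- a symmetric choice with no effect on the argument.
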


\begin{prop}\cite{RS}\label{prop4}
The $\alpha$-size of $T_{s,t}'$ is at least $\alpha'_{s,t}=\lfloor\frac{(5s+1)}{3}\rfloor +t$
\end{prop}

\begin{thm}\label{stardust}
Every lobster shell is an amalgamation of stardust. In particular, if $T$ is a lobster shell, then there exist stardust graphs $S_1, \dots, S_k$ with bipartite labelings $f_1,\dots, f_k$ so that $T=S_1 \circ S_2 \circ \dots, \circ S_k$  and there exists a bipartite labeling of $T$ so that $\alpha (T)\geq \varepsilon (f_1) + \dots + \varepsilon (f_k)$.
\end{thm}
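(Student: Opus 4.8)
The plan is to produce an explicit decomposition of a lobster shell into stardust along its spine and then grow a bipartite labeling piece by piece with Lemma~\ref{amalgam}. Fix a longest path $P=v_0v_1\cdots v_d$ witnessing that $T$ is a shell. Since every vertex off $P$ that is adjacent to $P$ has degree two, $T$ is exactly $P$ together with pendant paths of length two (legs) hanging from internal spine vertices. For $0\le i\le d-1$ let $S_i$ be the subtree consisting of $v_i$, all legs attached at $v_i$, and the single spine edge $v_iv_{i+1}$. Each $S_i$ is a broken comet with center $v_i$: the legs play the role of the comet rays, and the spine edge $v_iv_{i+1}$ is a leaf edge adjoined to the center (when $v_i$ carries no legs this degenerates to $P_2$, which is still stardust). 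These subgraphs partition $E(T)$, and consecutive pieces meet exactly in the spine vertex $v_{i+1}$, which is a leaf of $S_i$ and the center of $S_{i+1}$. Hence $T=S_0\circ S_1\circ\cdots\circ S_{d-1}$, which is the first assertion.

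For the weight bound I would iterate Lemma~\ref{amalgam} along this chain. Each $S_i$ is a spider with legs of length at most two, so it has a graceful and hence bipartite labeling in which all edge weights are distinct; moreover one can force the extreme label $0$ onto the center or onto any prescribed leaf. I would build the amalgamation one piece at a time, maintaining as an inductive invariant that the partial tree $R_j=S_j\circ\cdots\circ S_{d-1}$ carries a bipartite labeling with $\varepsilon\ge\sum_{i\ge j}\varepsilon(S_i)$ and with label $0$ seated on the still-open gluing vertex $v_j$ (the center of $S_j$). In the inductive step Lemma~\ref{amalgam} merges $S_j$ and $R_{j+1}$ at their common vertex $v_{j+1}$; this needs label $0$ at $v_{j+1}$ on both sides, which holds for $R_{j+1}$ by the invariant and for $S_j$ by using its leaf-at-$0$ labeling. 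The resulting labeling of $R_j$ then satisfies the required inequality on $\varepsilon$.

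The step I expect to be the main obstacle is re-establishing the invariant after each merge. The construction underlying Lemma~\ref{amalgam} leaves label $0$ on the vertex just glued, namely $v_{j+1}$, whereas the next merge requires label $0$ on the new open vertex $v_j$, which sits in the opposite class of the bipartition. To move $0$ I would use the complementary and inverse labelings introduced just before Lemma~\ref{amalgam}: each preserves $\varepsilon$ and the bipartite structure, the complementary labeling carrying the largest label to $0$ and the inverse labeling relocating $0$ within a fixed part. Since consecutive spine vertices $v_j$ alternate between the two classes, the delicate point is to compose or alternate these two operations so that $0$ arrives at $v_j$ at every stage, which in turn constrains how I must choose each stardust labeling so that its center is assigned precisely the label that the subsequent operations send to $0$. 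Managing this relocation of $0$ consistently along the whole spine, rather than any individual weight computation, is the technical heart of the argument.

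Carrying the invariant down to $R_0=T$ then yields a single bipartite labeling of $T$ with $\varepsilon\ge\sum_{i=0}^{d-1}\varepsilon(S_i)$, and since $\alpha(T)$ is the largest value of $\varepsilon$ over all bipartite labelings of $T$, we conclude $\alpha(T)\ge\varepsilon(S_1)+\cdots+\varepsilon(S_k)$, as claimed.
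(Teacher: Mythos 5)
Your decomposition along the spine into broken comets is essentially the paper's strategy (the paper groups each bare spine edge with a comet by amalgamating a $P_2$ onto the comet's centre, getting $k=d-3$ pieces rather than $d$, which only matters for the corollary that follows), but the execution has a genuine error. You claim that each $S_i$, being a spider with legs of length at most two, ``has a graceful and hence bipartite labeling in which all edge weights are distinct.'' This is false on both counts: graceful does not imply bipartite in the sense used here, and comets in particular do \emph{not} admit $\alpha$-labelings in general --- Proposition 4 of \cite{RS} gives their exact $\alpha$-size as about $\tfrac{5}{6}$ of the number of edges, and the paper itself quotes the upper bound $\alpha(T)\le\frac{5}{6}(m+9)$ for comets. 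If your premise were true, your argument would give $\alpha(T)\ge\sum_i\varepsilon(S_i)=m$, i.e.\ every lobster shell would have an $\alpha$-labeling, contradicting that upper bound. The theorem as stated survives because it only asserts $\alpha(T)\ge\sum_i\varepsilon(f_i)$ for whichever bipartite labelings $f_i$ are actually used, but your justification for being able to ``force the extreme label $0$ onto the center or onto any prescribed leaf'' rests on the false premise and must be replaced by the explicit $\bar{f}$/$\hat{f}$ manipulations.

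That relocation of $0$ is precisely the content of the paper's proof, and you explicitly defer it (``the technical heart of the argument''). Your guess that Lemma~\ref{amalgam} ``leaves label $0$ on the vertex just glued'' is also not what the Rosa--\v{S}ir\'{a}\v{n} construction does: in the merged labeling the identified vertex receives $\left|A_1\right|-1$, and $0$ lands on the vertex of $A_1$ whose $f_1$-label was largest. The paper closes the induction by representing each cut spine edge as a $P_2$, amalgamating it at the comet's centre (which carries $0$ under $\hat{f}$), and then alternating complementation $\bar{\cdot}$ with inversion $\hat{\cdot}$ so that $0$ arrives at the next gluing vertex at every stage; without carrying out some such explicit scheme your inductive invariant is not re-established and the argument does not close.
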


\begin{proof}
We will apply the following labeling from the proof of Lemma \ref{amalgam} to compose stardust graphs. We call this labeling the \emph{Rosa-\v{S}ir\'{a}\v{n}}-labeling or just $RS$-labeling. For $i=1,2$ let $(A_i,B_i)$ be the bipartition of $V(T_i)$ for which $f_i^{-1}(0)=u_i\in A_i$. Define the labeling $f$ of $T$ as
    \[ f(v) = \left\{\begin{array}{ll}
    \hat{f}_1(v), & \text{ if } v\in A_1\\
    f_2(v) + \left|A_1\right|-1, & \text{ if } v \in A_2 \cup B_2\\
    \hat{f}_1(v)+\left|A_2\right|+\left|B_2\right|-1, & \text{ if } v \in B_1
    \end{array}\right. \]

If $T$ has diameter $4$, then $T$ is a comet and we use the $RS$-labeling on $T$. 

Suppose that $T$ has diameter $5$. Let $P$ be a path of maximum length and consider a right layered representation of $T$. Remove the edge on $P$ between $L_2$ and $L_3$, creating two comets. Let $T_1$ be the comet that contains $L_0$ and let $S_2$ be the comet that contains $L_5$. Let $f$ be the $RS$-labeling of $T_1$ and $g$ be the $RS$-labeling of $S_2$. Notice that under $\hat{f}$, the label on the central vertex of $T_1$ is $0$. Thus, we can amalgamate $P_2$ and $T_1$ by labeling one vertex, $x$, of $P_2$ by $0$ and the other, $y$, by $\left|T_1\right|$. Call the amalgamated graph $S_1$. Next, notice that $y$ is labeled $0$ under the complementary labeling of the reverse labeling of $f$, $\overline{\hat{f}}$. 

We label $S_2$ by $\hat{g}$ so that the central vertex receives the label $0$. Finally, notice that labeling $S_1$ by $\overline{\hat{f}}$ and $S_2$ by $\hat{g}$ produces the required labeling of $T=S_1\circ S_2$.

Under this labeling of $T$, the maximum label is found on $L_4$ and the label $0$ is on $L_3$. 

If $T$ has diameter $6$ (and for each unit increase in diameter) the above labeling can be iterated. Remove the edge on $P$ between $L_3$ and $L_4$, creating a shell of diameter $5$, $T_1$, and a comet $S_2$. Let $f$ be the labeling found for diameter $5$ shells and amalgamate $P_2$ to $T_1$ with labeling $f$ and call the resulting graph $S_1$. Let $g$ be the $RS$-labeling of $S_2$. We can now amalgamate $S_2$ to the resulting graph, with labeling $\overline{f}$ on $S_1$ and $\hat{g}$ on $S_2$. The rest of the proof proceeds similarly as an induction on diameter.

\end{proof}

The $\alpha$-size of stardust from \cite{RS} was calculated in Proposition $\ref{prop4}$ and allows us to state the following bound.

\begin{cor}
For any lobster shell $T$ of size $m$, $\alpha(T) \geq \left\lfloor\frac{5m+2}{6}\right\rfloor$.
\end{cor}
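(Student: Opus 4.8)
The plan is to combine Theorem \ref{stardust} with the known $\alpha$-size of stardust graphs. By Theorem \ref{stardust}, any lobster shell $T$ decomposes as an amalgamation $T = S_1 \circ \dots \circ S_k$ of stardust graphs, and there is a bipartite labeling realizing $\alpha(T) \geq \varepsilon(S_1) + \dots + \varepsilon(S_k)$. So the task reduces entirely to bounding each $\varepsilon(S_i)$ from below using the cited Proposition $4$ of \cite{RS} and then summing.

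First I would record the formula from Proposition $4$ of \cite{RS} for the $\alpha$-size of a stardust (broken comet) graph in terms of its number of edges. If that proposition gives $\alpha(S_i) \geq \left\lfloor \frac{5 m_i + c}{6} \right\rfloor$ for a stardust with $m_i$ edges and some explicit constant $c$, then I would use this to lower-bound each $\varepsilon(S_i)$ appearing in the decomposition. The key bookkeeping fact is that when we amalgamate along distinguished vertices, the edges of $T$ are partitioned among the $S_i$ with no edge counted twice: amalgamation identifies a single vertex from each piece and adds no new edges, so $m = m_1 + \dots + m_k$.

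Next I would assemble the sum $\sum_i \left\lfloor \frac{5 m_i + c}{6}\right\rfloor$ and convert it into a single floor of the form $\left\lfloor \frac{5m + 2}{6}\right\rfloor$. The main obstacle is precisely this floor-function arithmetic: the sum of floors is not the floor of the sum, so I would need to control the accumulated rounding error across the $k$ pieces and verify that the per-piece constants combine to leave the claimed constant $2$ in the numerator. I expect this is where the diameter-by-diameter structure of the decomposition in Theorem \ref{stardust} matters, since the pieces are comets (and $P_2$'s glued in) of controlled sizes, so the individual $m_i$ and their residues mod $6$ can be tracked rather than being arbitrary. Handling the parity/residue cases so that the floors add up favorably, rather than losing a unit at each amalgamation, is the delicate step.

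Finally, having shown $\alpha(T) \geq \sum_i \varepsilon(S_i) \geq \left\lfloor\frac{5m+2}{6}\right\rfloor$, the corollary follows immediately. If the clean additive identity on floors does not hold in full generality, the fallback is to first bound $\sum_i \varepsilon(S_i) \geq \frac{5m+2}{6}$ without floors (summing the real-valued lower bounds, which add exactly), and then observe that since $\alpha(T)$ is an integer it must be at least $\left\lceil \frac{5m+2}{6}\right\rceil \geq \left\lfloor\frac{5m+2}{6}\right\rfloor$. This route sidesteps the floor-summation difficulty entirely and is the approach I would try first.
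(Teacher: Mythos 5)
Your overall route is the paper's: decompose $T$ via Theorem \ref{stardust}, invoke Proposition $4$ of \cite{RS} on each piece, and sum. But the step you defer --- making the floors add up --- is exactly where the content of the corollary lies, and your fallback does not close it. If Proposition $4$ gives $\varepsilon(S_i)\geq\left\lfloor\frac{5m_i+2}{6}\right\rfloor$, the only real-valued bound you can extract from it is $\varepsilon(S_i)\geq\frac{5m_i+2}{6}-\frac{5}{6}=\frac{5m_i-3}{6}$; summing over $k$ pieces with $m=m_1+\dots+m_k$ yields only $\frac{5m-3k}{6}$, which is strictly below $\frac{5m+2}{6}$ already for $k=1$. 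So the claim that the real-valued lower bounds ``add exactly'' to $\frac{5m+2}{6}$ is false, and passing to a ceiling afterwards cannot rescue it.

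The missing ingredient is a source of $+1$ per amalgamation to offset the rounding loss: in general $\sum_{i=1}^{k}\left\lfloor x_i\right\rfloor\geq\left\lfloor\sum_{i=1}^{k} x_i\right\rfloor-(k-1)$ (this is the intended content of the paper's inequality \ref{eq1}, whose sign as printed is reversed), so one must find $k-1$ additional distinct weights somewhere. The paper gets them from the decomposition itself: its displayed count credits each of the $k-1$ stardust pieces $S_1,\dots,S_{k-1}$ with $\left\lfloor\frac{5m_i+2}{6}\right\rfloor+1$ distinct weights --- the extra unit coming from the pendant edge at the central vertex, i.e.\ the copy of $P_2$ glued in during the construction of Theorem \ref{stardust} --- while the terminal comet contributes $\left\lfloor\frac{5m_k+2}{6}\right\rfloor$. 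With those $k-1$ extra units one gets $\sum_{i=1}^{k}\left\lfloor\frac{5m_i+2}{6}\right\rfloor+(k-1)\geq\left\lfloor\frac{5m+2k}{6}\right\rfloor\geq\left\lfloor\frac{5m+2}{6}\right\rfloor$. You correctly flag the residue bookkeeping as the delicate step, but without isolating that extra distinct weight per piece, neither your main route nor your fallback reaches the stated constant.
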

\begin{proof}
We use the simple property that for any sequence of non-negative real numbers, $x_1, \dots, x_j$,
\begin{align}
\left\lfloor x_1\right\rfloor + \left\lfloor x_2\right\rfloor + \dots + \left\lfloor x_j\right\rfloor \geq \left\lfloor\sum_{i=1}^j x_i \right\rfloor +j-1 \label{eq1}
\end{align}
We decompose $T$ into stardust graphs and a comet as in the Theorem \ref{stardust}, so that $T=S_1 \circ S_2 \circ \dots, \circ S_{k-1}\circ T_k$.
Using Proposition $4$ of \cite{RS}, we calculate the $\alpha$-size of each stardust graph and the comet as at least
\[\left(\sum_{i=1}^{k-1}\left\lfloor\frac{5m_i+2}{6} \right\rfloor \right) + \left\lfloor \frac{5m_k+2}{6} \right\rfloor\]
where $m_i$ is the size of $S_i$. 

Note that $k=d-3$, where $d$ is the diameter of $T$. Applying \ref{eq1} completes the proof.
\end{proof}

In light of the result from \cite{RS} that for any comet $T$, $\alpha(T)\leq \frac{5(m+9)}{6}$, the above corollary shows that lobster shells have maximum $\alpha$-size with respect to the multiplicative constant.

We can extend Definition \ref{shell} for any three-distant tree $T$ with longest path $P$ by defining the \emph{shell of a three distant tree} as the graph found by contracting all branches not on $P$ with size less than $3$ and contracting all but one leaf vertex adjacent to vertices of distance two to $P$. 

In \cite{K}, the author showed

\begin{thm}\cite{K}
Every lobster with an almost perfect matching is graceful.
\end{thm}

We challenge the interested reader to prove the following natural extensions of results on lobsters to three-distant trees:

\begin{conj}
Every shell of a three-distant tree is graceful.
\end{conj}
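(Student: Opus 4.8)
The plan is to follow the blueprint of the lobster-shell results (Proposition~\ref{perfmatch}, Theorem~\ref{gracefulshells}, and Theorem~\ref{stardust}), pushing each ingredient up from depth two to depth three. Fix a longest path $P$ of the three-distant tree and take a right-layered representation of its shell $T$. As in Theorem~\ref{stardust}, I would decompose $T$ along $P$ into an amalgamation $T = S_1 \circ S_2 \circ \cdots \circ S_k$, where each $S_i$ is a \emph{thick comet}: a short segment of $P$ carrying the branches of size at least three that hang from one of its vertices, so that $S_i$ is itself a three-distant tree of diameter at most six. The amalgamation vertices are cut vertices of $P$, and the strategy is to produce, for each $S_i$, a bipartite graceful labeling $f_i$ with the distinguished vertex labeled $0$, and then to splice these together with Lemma~\ref{amalgam} using the inverse ($\hat{f}$) and complementary ($\bar{f}$) labelings exactly as in the diameter induction of Theorem~\ref{stardust}.

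The first genuinely new step is the base case, namely showing that each piece $S_i$ is graceful. Here the lobster machinery breaks down, because a branch of size three can be a star $K_{1,3}$, and such a branch forces a matching defect. Consequently Proposition~\ref{perfmatch} fails for three-distant shells, and with it the appeal to Morgan's Theorem~\ref{Morgan} and to the perfect-matching/caterpillar-contree route of Corollary~\ref{BHLobsters}. I would instead split each thick branch into its \emph{reducible} part (paths and even sub-branches admitting a perfect matching) and its \emph{irreducible} spider-like remainder. On the reducible part one contracts a matching to a caterpillar and reinflates it with the generalized $\Delta_{+1}$ construction of \cite{BF}, substituting $P_2$ (or $P_3$) into the backbone; the irreducible remainder would be labeled by a direct, case-based assignment chosen so that its weight set slots into the gaps left by the $\Delta_{+1}$ part. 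The output must be a bipartite labeling placing label $0$ on the cut vertex and having the bipartition-class sizes needed for $\hat{f}_i$ and $\bar{f}_i$ to behave correctly in the splicing.

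The assembly step then runs as the induction in Theorem~\ref{stardust}: remove the edge of $P$ between consecutive pieces, amalgamate the next thick comet $S_{i+1}$ at the following cut vertex, and pass to labelings of the form $\overline{(f_i \circ P_2)} \circ \hat{f}_{i+1}$ so that the distinguished vertex of the growing tree always exhibits label $0$ where the next piece attaches. One then checks that the maximum label equals $m$ and that the induced weights exhaust $\{1, 2, \dots, m\}$, giving gracefulness.

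I expect the main obstacle to sit precisely at the interface between these two halves. Lemma~\ref{amalgam} only guarantees $\varepsilon(f) \ge \varepsilon(f_1) + \varepsilon(f_2)$, which certifies a good $\alpha$-size but not gracefulness; to force every weight to be distinct I must arrange that the weight multisets of the pieces tile $\{1, \dots, m\}$ with neither gap nor overlap. This demands tight simultaneous control of the label ranges and of the two bipartition-class sizes at every amalgamation, and it is made delicate by the spider-shaped branches, whose labels cannot be chosen as freely as in the purely path-and-matching lobster case. Proving that the direct labeling of each irreducible remainder can always be tuned to fill exactly the residual weights, uniformly over all shapes of depth-three branches, is, I believe, where the real difficulty of the conjecture resides.
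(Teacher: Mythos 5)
First, be aware that the paper does not prove this statement: it is posed explicitly as an open conjecture (the authors ``challenge the interested reader'' to prove it), so there is no proof of record to compare yours against. Judged on its own terms, your proposal is a plausible plan of attack but not a proof, and you have correctly located where it breaks down. The decisive gap is the base case: you never construct the ``direct, case-based assignment'' for the irreducible spider-like remainders of a depth-three branch, and that is precisely the content of the conjecture rather than a detail that can be deferred. A related concrete error: the generalized $\Delta_{+1}$ construction of Burzio and Ferrarese substitutes the \emph{same} tree $T$ at every non-exceptional vertex of $S$ (the generalization only relaxes which pair of corresponding vertices carries the connecting edge), so it does not permit substituting $P_2$ at some backbone vertices and $P_3$ at others as your reinflation step requires.

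The assembly step is equally problematic. Lemma~\ref{amalgam} certifies only $\varepsilon(f)\geq\varepsilon(f_1)+\varepsilon(f_2)$, a lower bound on the $\alpha$-size, and accordingly Theorem~\ref{stardust} concludes only an $\alpha$-size bound, never gracefulness. The paper's actual gracefulness result for lobster shells, Theorem~\ref{gracefulshells}, deliberately avoids amalgamation: it goes through a perfect or near-perfect matching (Proposition~\ref{perfmatch}), contraction to a caterpillar, and then Morgan's theorem or the $\Delta_{+1}$ construction --- exactly the route you correctly observe is blocked for three-distant shells by $K_{1,3}$-shaped branches, which destroy the matching structure. Your proposal therefore reduces the conjecture to two unproved claims --- graceful labelings of the pieces with prescribed label $0$ and controlled bipartition-class sizes, and an exact, gap-free tiling of the weight sets under amalgamation --- without establishing either, so the statement remains open.
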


\begin{conj}
Every three-distant tree with a perfect matching is graceful.
\end{conj}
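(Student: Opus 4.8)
The plan is to route the problem through the contree, exactly as in the Broersma--Hoede machinery. Let $T$ be a three-distant tree with its (necessarily unique) perfect matching $M$. The first step is structural: every leaf at distance three from a longest path $P$ is forced to be matched to its parent at distance two, and that parent can have no other child, so each such pendant edge is a $P_2$ whose contraction drops the depth of its branch from three to two. Consequently the contree $T'=T/M$ is a lobster. By the construction underlying Theorem \ref{BHEquivalence} (the BH labeling reviewed in Section 2.2), any graceful labeling of $T'$ lifts to a strongly graceful, hence graceful, labeling of $T$. Thus it suffices to prove that the contree, which we have just shown is always a lobster, is graceful.

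The difficulty is that this contree can be an essentially arbitrary lobster: applying the pendant expansion to a lobster $L$ produces a three-distant tree with perfect matching whose contree is $L$, and $L$ need carry neither a perfect nor an almost-perfect matching (a broom hung off a long spine already defeats that). So we cannot simply invoke Morgan's Theorem \ref{Morgan} or the shell result Theorem \ref{gracefulshells} on $T'$. I would therefore abandon the plain contree reduction and instead build a graceful labeling of $T$ directly, adapting the amalgamation and generalized $\Delta_{+1}$ technique used in Theorems \ref{stardust} and \ref{gracefulshells}. Concretely, I would prune the forced depth-three pendant $P_2$'s to obtain a lobster $T^-$, choose the pruning so that $T^-$ inherits a perfect matching (so that Theorem \ref{Morgan} supplies a graceful labeling of $T^-$ with a prescribed largest label), and then reattach the pruned $P_2$'s at the corresponding distance-one vertices by the generalized $\Delta_{+1}$ construction of Burzio--Ferrarese \cite{BF}, exactly as the pendant $P_2$'s were attached in Theorem \ref{gracefulshells}. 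Alternatively one decomposes $T$ along $P$ into stardust-with-hairs blocks $T=B_1\circ\cdots\circ B_k$, labels each block by a bipartite labeling obtained from its caterpillar contree via Corollary \ref{BHLobsters}, and amalgamates them with complementary and inverse labelings through Lemma \ref{amalgam}, as in the proof of Theorem \ref{stardust}.

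The main obstacle is getting exactness. Lemma \ref{amalgam} and the $\alpha$-size estimates only bound the number of distinct weights from below, whereas gracefulness demands that the weight sets of the blocks tile $\{1,\dots,m\}$ with no repetition. The leverage that should force this is the rigidity of the perfect matching: in a strongly graceful labeling every matched pair sums to $n-1$, which pins down the ranges occupied by the blocks. I expect the hard steps to be (i) verifying that the base lobster $T^-$ can always be chosen with a perfect matching and with its largest label at the vertex used as the exceptional vertex of the $\Delta_{+1}$ construction, and (ii) checking that reattaching the $P_2$ hairs introduces no weight collisions with the base. This is exactly the point at which the problem brushes against Bermond's conjecture, and I would first try to settle it under the additional hypothesis that $T^-$ has a perfect matching, recovering the conjecture on the complementary case from the companion conjecture that every lobster with an almost perfect matching is graceful.
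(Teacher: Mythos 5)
This statement is one of the paper's explicitly open problems --- it appears in the text as a conjecture that the authors ``challenge the interested reader to prove,'' so there is no proof in the paper to compare against, and your proposal does not close the gap. You correctly diagnose the central obstruction: contracting the perfect matching of a three-distant tree yields (at best) a lobster, and lifting a graceful labeling of that contree back to $T$ via the Broersma--Hoede machinery would require the gracefulness of an essentially arbitrary lobster, i.e.\ Bermond's conjecture. But the alternative routes you sketch do not repair this. The generalized $\Delta_{+1}$ construction of Burzio--Ferrarese replaces \emph{every} non-exceptional vertex of $S$ by a copy of $T$; it is used in Theorem \ref{gracefulshells} precisely because a shell with an almost perfect matching \emph{is} uniformly the expansion of its caterpillar contree by $P_2$'s. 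In your setting the pruned $P_2$'s must be reattached only at a select subset of the distance-one vertices, which is not an instance of $\Delta_{+1}$ at all, and attaching extra pendant paths at arbitrary vertices of a graceful labeling generically creates weight collisions. Moreover, Theorem \ref{Morgan} guarantees \emph{some} graceful labeling of the pruned lobster $T^-$ but gives no control over which vertex receives the largest label, which the $\Delta_{+1}$ construction requires of its exceptional vertex.

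The second route, through Lemma \ref{amalgam} and the stardust decomposition, is structurally incapable of proving gracefulness: that lemma and the $\alpha$-size estimates of Theorem \ref{stardust} only bound the number of distinct weights from below, and you identify no mechanism that forces the weight sets of the blocks to partition $\{1,\dots,m\}$ exactly. The appeal to the ``rigidity'' of the matched-pair sum $n-1$ in a strongly graceful labeling is a hope rather than an argument --- strong gracefulness of $T$ is what you are trying to establish, so it cannot be assumed to pin down the block ranges. Finally, your closing reduction defers the complementary case to the conjecture that every lobster with an almost perfect matching is graceful, which is itself listed as open in this paper. In short, you have accurately mapped the terrain and the reasons the known tools fail, but every step that would constitute progress beyond the paper is either circular, misapplies a construction, or is explicitly left unresolved.
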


\begin{conj}
Every three-distant tree with an almost perfect matching is graceful.
\end{conj}

\subsection{Edge-Relaxed Labelings}

We introduce some useful notation.

Let $T$ be a lobster and $P$ a longest path in $T$. Suppose that $T$ is represented as a right-layered rooted tree, where the root is an end-vertex of $P$. We say that an edge is \emph{on level $i$} if it is incident to the vertex of $P$ on $L_{i-1}$. An edge is of \emph{distance $0$} to $P$ if it is an edge of $P$. For $i\geq 1$, an edge is of \emph{distance $i$} to $P$ if it is incident to a vertex which is of distance $i-1$ from $P$.

\begin{itemize}
\item  $\theta_i$ is the number of elements in the set of those weights of edges on level $i$ that occurred on levels above $i$ (that is, levels $j<i$)
\item $\theta=\sum_{i=0}^d \theta_i$
\item $d_i$ is the average degree over vertices on level $i$ which are of distance $1$ from $P$ and have at least one neighbor not on $P$
\item $s(i,i+1)$ is the number of pairs of consecutive vertices at distance $2$ from each other (surplus) on level $i$ which are also vertices of distance $1$ or $0$ to $P$, and the number of pairs of consecutive vertices at distance $2$ (surplus) on level $i+1$ which are also vertices of distance $2$ to $P$
\item $\alpha(i,i+1)$ is the number of distinct weights on levels $i$ and $i+1$ so that edges on level $i$ are distance $0$ or $1$ to $P$ and edges on level $i+1$ are of distance $2$ to $P$
\item $m(i,i+1)$ is the number of edges on levels $i$ and $i+1$ so that edges on level $i$ are of distance $0$ or $1$ to $P$ and edges on level $i+1$ are of distance $2$ to $P$
\item $p_i$ is the number of edges on level $i$ which are leaves of distance $1$ to $P$.
\end{itemize}

Since $s(i,i+1)$ counts pairs of consecutive incident edges and $ex_{i+1}$ counts pairs of consecutive non-incident edges, the following relation holds for any right-layered tree $T$
\begin{align}
s(i,i+1) + ex_{i+1} +1 = m(i,i+1)\label{relation}
\end{align}

\begin{thm}\label{edge-relaxedbip}
 For any lobster $T$ with $m$ edges and diameter $d$, $\alpha(T)\geq \max\{\frac{3m-d+6}{4},\frac{5m+d+15}{8}\}$.
\end{thm}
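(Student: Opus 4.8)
The plan is to build a single bipartite labeling $f$ of $T$ from its right-layered representation and to count its distinct edge weights group by group, where the natural group is the set of $m(i,i+1)$ edges consisting of the path edge and the distance-one edges on level $i$ together with the distance-two leaf edges on level $i+1$. I would bound $\varepsilon(f)$ below by $m$ minus the number of edges whose weight repeats one from a higher level, a quantity controlled by $\theta=\sum_i\theta_i$, so that the whole argument reduces to bounding $\theta$ from above, equivalently to bounding each $\alpha(i,i+1)$ from below. Lemma \ref{amalgam} is what lets me add the distinct-weight counts of the pieces once the groups are arranged as amalgamations.

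First I would assign labels level by level exactly as in the proof of Theorem \ref{layered}, alternating descending and ascending order on successive levels, but now compressing the label set into $\{0,\dots,m\}$ so that $f$ becomes bipartite. The compression is precisely what destroys the distinctness enjoyed by the range-relaxed labeling, so the repeats $\theta$ are governed by the excess. Within the $(i,i+1)$ group I would use the local identity $s(i,i+1)+ex_{i+1}+1=m(i,i+1)$ from (\ref{relation}): the surplus part $s(i,i+1)$ together with the path edge contribute genuinely new weights, while the $ex_{i+1}$ excess pairs are the ones that may collide with weights created above. The average-degree parameter $d_i$ enters here, since a distance-one vertex of degree $\delta$ on level $i$ carries $\delta$ incident group edges; a short case analysis on $\delta$ (using $\delta\ge 2$) bounds how many of its weights can be forced distinct and thereby bounds $\alpha(i,i+1)$ below by an explicit function of $m(i,i+1)$ and $d_i$.

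To pass from the local counts to the two global bounds I would sum over $i$ and feed in the identities $s(T)+ex'(T)+d=m$ from (\ref{levelsum}) with $ex'(T)=ex(T)$ from Observation \ref{lobstermash}, together with $s(T)\ge ex'(T)$ from (\ref{lobstex}), which yield $ex(T)\le\tfrac{m-d}{2}$. The two stated bounds then come from two complementary ways of aggregating. Charging the excess at the rate forced by $s(T)\ge ex(T)$ produces the coefficient $\tfrac34$ and the bound $\tfrac{3m-d+6}{4}$, strongest when $d$ is small; an estimate that instead credits the $d$ path edges outright but is more conservative on the branch weights produces $\tfrac{5m+d+3}{8}$, which takes over for large $d$. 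Taking the maximum of the two finishes the proof, and the additive constants ($+6$, $+3$) should fall out as boundary corrections from the ends of the path and the $+1$ term in (\ref{relation}).

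The main obstacle I expect lies not in the local count but in the interaction between groups: I must guarantee that the single compressed bipartite labeling realizes all the local distinct-weight counts simultaneously, that is, that the weights created in one group do not collide with those of higher levels beyond what $ex_{i+1}$ already accounts for. Controlling the cross-level collisions $\theta_i$ while respecting the global threshold that makes $f$ bipartite is the delicate step, and it is where the precise alternation of ascending and descending order per level and the planarity of the right-layered representation (no crossing edges) must be exploited to keep the per-group estimates additive.
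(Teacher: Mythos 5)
Your overall strategy --- label the right-layered tree level by level with alternating directions so that odd and even levels occupy disjoint label intervals, then count distinct weights group by group using the identity $s(i,i+1)+ex_{i+1}+1=m(i,i+1)$ and the degree parameter $d_i$ --- is exactly the paper's, and your identification of the cross-level collisions $\theta_i$ as the delicate point is on target. But two of the mechanisms you propose for closing the argument do not work as stated. First, Lemma \ref{amalgam} cannot be the tool that makes the per-group counts additive: that lemma builds a new labeling by composing two \emph{independent} bipartite labelings whose distinguished vertices are both labeled $0$, whereas you (like the paper) have committed to a single global labeling assigned level by level; the groups share spine vertices whose labels are already fixed, so there is nothing to amalgamate. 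The paper instead proves additivity directly (Claim \ref{norepeat}): under this labeling the label sets of the levels are disjoint intervals arranged so that an edge weight occurring on level $i$ can recur only on level $i+1$, and at most once. Without that claim, or something equivalent, your per-group lower bounds on $\alpha(i,i+1)$ do not sum.

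Second, your global aggregation does not recover the stated constants. From $s(T)\geq ex'(T)$ and $s(T)+ex'(T)+d=m$ you get $ex(T)\leq\frac{m-d}{2}$, and with $\theta_i\leq\left\lceil\frac{ex_{i+1}}{2}\right\rceil$ this yields roughly $\alpha(T)\geq m-\frac{ex(T)}{2}$ minus one ceiling correction per level; those corrections can cost up to $d-3$ in total, degrading the bound to about $\frac{3m+d}{4}-(d-3)=\frac{3m-3d+12}{4}$, which is weaker than $\frac{3m-d+6}{4}$ for $d>3$. The paper avoids this by working per group: $d_i\cdot ex_{i+1}\leq m(i,i+1)-1$ gives $\alpha(i,i+1)\geq m(i,i+1)-\left\lceil\frac{m(i,i+1)-1}{4}\right\rceil\geq\frac{3}{4}m(i,i+1)-\frac{1}{4}$, and summing over the at most $d-3$ interior groups (plus the never-repeated weights near levels $0$, $1$, and $d-1$) gives $\frac{3}{4}(m-3)-\frac{1}{4}(d-3)+3=\frac{3m-d+6}{4}$. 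Likewise the second bound is not obtained by ``crediting the path edges outright'': it comes from observing that when $d_i=2$ each branching level consumes at least two non-spine edges, so the number of groups that can lose a weight is at most $\frac{m-3-d}{2}$, and rerunning the same sum with that count in place of $d-3$.
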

\begin{proof}
We label the right-layered tree $T$ as follows. For odd $d$ and $l=\sum_{i \text{ odd}}\left|L_i\right|$ we label vertices consecutively from right to left starting from level $d$ and decreasing levels by two until level $1$, from the interval
$\left[ 0, l-1\right].$
Then, from level $0$ to level $d-1$, label vertices consecutively from left to right from the interval
$\left[ l, \left(\sum_{i=0}^d\left|L_i\right|\right)-1\right].$

For even $d$ and $l=\sum_{i \text{ even}}\left|L_i\right|$ we label the vertices consecutively from right to left starting from level $d$ and decreasing levels by one until level $0$, from the interval
$\left[ 0,  l-1\right].$
Then, from level $1$ to level $d-1$, label vertices consecutively from left to right from the interval
$\left[ l, \left(\sum_{i=0}^d\left|L_i\right|\right)-1\right].$

\begin{claim}\label{norepeat}
Every edge weight of $T$ may be repeated at most once and only on consecutive levels.
\end{claim}
\begin{proof}
Let $P$ be a maximum path of the right-layered representation of $T$ beginning at the root with vertices $x_0,x_1, \dots, x_d$. Call the above labeling function $f$, and for any level $L_i, 0\leq i \leq d$, let $f(L_i)$ denote the labels of vertices on $L_i$. We say an edge $e$ is \emph{on level $i$} if $e$ joins vertices on levels $i-1$ and $i$.

Notice that by definition of $f$, $f(L_i)$ has no repetitions for any $i$.

Let $l_i=\left|L_i\right|$ and choose $i,j$ so that $1\leq i+1<j\leq d$. We show that no weight of an edge from $L_i$ can be repeated on $L_j$. Assume $i$ and $j$ are of the same parity and that $f(x_i)$ is the minimum label on level $i$. Notice that
\[f(L_i)= \left[ f(x_i), f(x_i)+l_i-1\right],\] \[f(L_j)= \left[ f(x_j), f(x_j)+l_j-1\right]\]
and $f(x_j)>f(x_i)+l_i-1$. Similarly, if $f(x_i)$ is the maximum label on level $i$, we have \[f(L_i)= \left[ f(x_i)-l_i+1, f(x_i)\right],\] \[f(L_j)= \left[ f(x_j)-l_j+1, f(x_j)\right]\] and $f(x_j)<f(x_i)-l_i+1$.

In either case, by considering $f(L_{i-1})$ and $f(L_{j-1})$, we can show that the weights of edges on $L_i$ and $L_j$ are distinct. That is, if $f(x_i)$ is the maximum label on level $i$, then we have $f(x_j)<f(x_i)-l_i+1$ and $f(x_{j-1})>f(x_{i-1}) + l_{i-1}-1$. Then the difference between the maximum edge-label for edges with vertices on levels $i-1$ and $i$ and the minimum label for edges with vertices on levels $j-1$ and $j$ is given by 

\begin{align*}
&(f(x_{j-1})-f(x_j))-((f(x_i)-l_{i-1}+1)-(f(x_i)+l_i-1))\\
&=(f(x_{j-1}-f(x_{i-1})+(f(x_i)-f(x_j))-l_i-l_{i-1}+2\\
&>(l_{i-1}-1)+(l_i-1)-l_i-l_{i-1}+2=0.
\end{align*}

If $i$ and $j$ are of opposite parity, assume without loss of generality that $f(x_i)$ is the minimum label on level $i$. Notice that
\[f(L_i)= \left[ f(x_i),  f(x_i)+l_i-1\right],\] \[f(L_{j-1})= \left[ f(x_{j-1},  f(x_{j-1})+l_{j-1}-1\right]\]
and $f(x_{j-1})>f(x_i)+l_i-1$. Again, considering $f(L_{i-1})$ and $f(L_{j})$, we see that the weights on $L_i$ and $L_j$ are distinct.

Suppose next that $0\leq i \leq d-1$ and consider the weights on $L_i$ and $L_{i+1}$. Since no edge weight can be repeated on a given level, any edge weight on level $L_i$ can be repeated at most once on level $L_{i+1}$, which proves the claim.
\end{proof}

\begin{claim}\label{surplusdegree-excess}
$s(i,i+1)\geq (d_i-1) ex_{i+1}+p_i$
\end{claim}

\begin{proof}
 If a pair of vertices $u,v$ on level $i$ with distance $0$ or $1$ to $P$ contribute to the surplus on level $i$, and each has a neighbor on $L_{i+1}$, then $u$ and $v$ have neighbors which contribute to the excess of $L_{i+1}$. Furthermore, the neighbors of $u$ on $L_{i+1}$ contribute to the surplus on $L_{i+1}$.
\end{proof}

Combining \ref{relation} with the above claim produces

\begin{align}
p_i+d_i \times ex_{i+1}\leq m(i,i+1)-1 \label{ineq1}
\end{align}

\begin{claim}\label{theta}
$\theta_i \leq \left\lceil\frac{d_i-1}{d_i}ex_{i+1}\right\rceil$
\end{claim}
\begin{proof}
Notice that for every pair of consecutive vertices $u,v$ on $L_i$, each of distance one to $P$, if $u$ and $v$ have descendants, then the pair $u,v$ corresponds to some pair of descendants on $L_{i+1}$ which contribute $1$ to $ex_{i+1}$. Furthermore, in a right-layered tree, the vertices of $L_i$ with distance one to $P$ are unique as incident vertices to edges of $L_i$ which may contribute to $\theta_i$ in repeating weights of $L_{i-1}$.

Notice that consecutive vertices of $L_{i+1}$ of distance $2$ away from each other, and distance $2$ from $P$, may be incident to edges of $L_i$ with weights that occured on $L_{i-1}$. Moreover, these weights are consecutive. However, for every consecutive pair of vertices of $L_{i+1}$ of distance more than $2$ away from each other, and distance $2$ from $P$, the weights of the corresponding edges have a difference of $2$.

Also, note that the minimum weight of edges of $L_{i-1}$ that are incident to edges of $L_i$ cannot be repeated since such a weight is on an edge $e$, which may only be incident to the same vertex $v$ as the edge $f$ with the minimum weight of $L_i$, and the other vertices incident to $e$ and $f$ must have different labels.

Suppose next that $d_i$ is an integer and for every vertex $v$ of $L_i$ of distance $1$ from $P$ with at least one neighbor not on $P$, $\deg(v)=d_i$. Call this the uniform case, and notice in light of the above observations, $\theta_i=\left\lceil\frac{d_i-1}{d_i}ex_{i+1}\right\rceil$. Moreover, in a right-layered tree, the degree sequence of vertices of $L_i$ but not of $P$, with neighbors on $L_{i+1}$, are monotonically increasing so that the number of weights that are skipped on level $i$ is the same as in the uniform case, though the skips in weights may occur between edges farther to the left. This observation completes the proof. 
\end{proof}

Notice that by Claim \ref{theta} and formula \eqref{ineq1} we can write
\begin{align}
\alpha(i,i+1)\geq m(i,i+1)-\theta_{i+1}\geq m(i,i+1)-\left\lceil\frac{d_i-1}{d_i}ex_{i+1}\right\rceil\nonumber\\
\geq m(i,i+1)-\left\lceil\frac{(d_i-1)(m(i,i+1)-1-p_i)}{d_i^2}\right\rceil\label{alphalevelgen}
\end{align}

The above term is minimized when $d_i=2$ and we obtain the bound
\begin{align} 
\alpha(i,i+1)\geq  m(i,i+1)-\left\lceil\frac{m(i,i+1)-1-p_i}{4}\right\rceil\label{alphalevel}
\end{align}

If $m(i,i+1)$ is even under the assumption that $d_i=2$, then notice that $p_i\geq 1$, and inequality \eqref{alphalevel} can be rewritten as

\[
\alpha(i,i+1)\geq \left\{\begin{array}{ll}
\frac{3}{4}m(i,i+1)+\frac{1}{2}, \text{ when } m(i,i+1)\equiv 0 \pmod{4}\\[0.5ex]
 \frac{3}{4}m(i,i+1), \text{ when } m(i,i+1)\equiv 2 \pmod{4}
\end{array}\right.
\]

If $m(i,i+1)$ is odd, inequality \eqref{alphalevel} can be rewritten as
\[
\alpha(i,i+1)\geq \left\{\begin{array}{ll}
\frac{3}{4}m(i,i+1)+\frac{1}{4}, \text{ when } m(i,i+1)\equiv 1 \pmod{4}\\[0.5ex]
 \frac{3}{4}m(i,i+1)-\frac{1}{4}, \text{ when } m(i,i+1)\equiv 3 \pmod{4}
\end{array}\right.
\]

Furthermore, edges of distance $0$ or $1$ from $P$ on levels $0,1,$ and $d-1$, have weights that are never repeated.

Thus, the ``worst case'' for the number of distinct edge weights of any lobster $T$ in our labeling is one where $m(i,i+1)$ is congruent to $3 \pmod 4$, producing the number of distinct weights of $T$ as
\[\alpha(T)\geq 1+ \sum_{i=0}^{d-1}{\alpha(i,i+1)}\geq \frac{3}{4}(m-3)-\frac{1}{4}(d-3)+3=\frac{3m-d+6}{4}\]

Note that since there are at least three edges with weights that are never repeated, we remove those weights from the the above sum and add them back. Notice that this bound is an improvement on \cite{RS} for small diameter trees, in particular, when $d<\frac{m+22}{7}$. With a few modifications, we can also improve the $\alpha$-size of lobsters with large diameter.

We observe that when $d_i=2$ and $p_i=0$ for all $i$, the number of levels with incident edges to $P$ is at most $\frac{m-3-d}{2}$. Thus we calculate
\[\alpha(T)\geq 1+ \sum_{i=0}^{d-1}{\alpha(i,i+1)}\geq \frac{3}{4}(m-3)-\frac{1}{4}\left(\frac{m-3-d}{2}-3\right)+3=\frac{5m+d+15}{8}\]

This bound is an improvement on \cite{RS} when $d>\frac{5m-65}{7}$.
\end{proof}

It is not difficult to find a lobster $T$ with a perfect matching such that any BH labeling of $T$ is not bipartite, as in the Figure \ref{notbip}.


\begin{figure}[ht]
\begin{center}
\begin{tikzpicture}[scale=1]
\tikzstyle{vert}=[circle,fill=black,inner sep=3pt]
\tikzstyle{overt}=[circle,fill=black!30, inner sep=3pt]

  \node[vert, label=left:\tiny{}] (u-1) at (1,1) {};
  \node[overt, label=right:\tiny{}] (u-2) at (2,1) {};
  \node[vert, label=right:\tiny{}] (u-3) at (3,1) {};
  \node[overt, label=left:\tiny{}] (u-4) at (4,1) {};
  \node[vert, label=right:\tiny{}] (u-5) at (5,1) {};
  \node[overt, label=right:\tiny{}] (u-6) at (6,1) {};
  \node[vert, label=right:\tiny{}] (u-7) at (7,1) {};
  \node[overt, label=right:\tiny{}] (u-8) at (8,1) {};

  \node[vert, label=left:\tiny{}] (v-1) at (1,1.5) {};
  \node[vert, label=right:\tiny{}] (v-2) at (1,2) {};
  \node[overt, label=below:\tiny{}] (v-3) at (2,1.5) {};
  \node[overt, label=left:\tiny{}] (v-4) at (2,2) {};
  \node[overt, label=right:\tiny{}] (v-5) at (2,2.5) {};
  \node[overt, label=above:\tiny{}] (v-6) at (2,3) {};
  \node[vert, label=above:\tiny{}] (v-7) at (3,2.5) {};
  \node[vert, label=above:\tiny{}] (v-8) at (3,3) {}; 
  \node[vert, label=above:\tiny{}] (v-9) at (3,3.5) {};
  \node[overt, label=above:\tiny{}] (v-10) at (4,3.5) {};
  \node[overt, label=above:\tiny{}] (v-11) at (4,4) {};    
  \node[vert, label=above:\tiny{}] (v-12) at (5,4) {};

   \draw[color=black] 
   (u-2)--(u-3) (u-4)--(u-5) (u-6)--(u-7)
   (v-3)--(u-3)
   (v-4)--(u-3)
   (v-7)--(u-4)
   (v-8)--(u-4)
   (v-10)--(u-5)
   (v-12)--(u-6)
;
  
  \draw[color=red, style=very thick]
  (u-1)--(u-2) (u-3)--(u-4) (u-5)--(u-6) (u-7)--(u-8)
  (v-1)--(v-3)
  (v-2)--(v-4) 
  (v-5)--(v-7)
  (v-6)--(v-8)
  (v-9)--(v-10)
  (v-11)--(v-12)
  ;
  
\end{tikzpicture}
\caption{A lobster with a perfect matching but no bipartite BH labeling}
\label{notbip}
\end{center}
\end{figure}
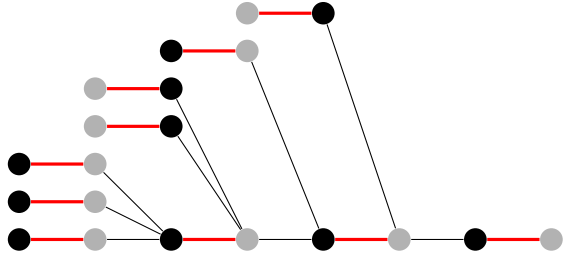

Although we cannot improve bounds on $\alpha(T)$ by applying BH labelings, the gracesize is another matter.

For any lobster $T$ of size $m$, let twice the size of a maximum matching on $T$ be $\nu(T)$, or just $\nu$ if $T$ is clear from context.

\begin{thm}\label{edge-relaxed}
 For any lobster $T$ with $m$ edges and diameter $d$, \[gs(T)\geq\max\left\{\frac{3}{4}m+\frac{d-\nu}{8}+\frac{3}{2},\nu\right\}.\]
\end{thm}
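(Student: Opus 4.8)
The plan is to prove the two estimates inside the maximum separately and, for each lobster $T$, keep whichever labeling is better; since both quantities are lower bounds on $gs(T)$, so is their maximum. Throughout I would exploit that a bipartite labeling is in particular an edge-relaxed labeling, so $\alpha(T)\le gs(T)$ and the bipartite estimate of Theorem \ref{edge-relaxedbip} is already available as a gracesize bound to be improved.

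For the bound $gs(T)\ge\nu$, the natural route is to feed a maximum matching straight into the Broersma--Hoede machinery. Fix a matching $M$ with $|M|=\nu/2$. If $M$ is perfect then Theorem \ref{Morgan} already gives a graceful labeling, so $gs(T)=m$ and this term is dominated; otherwise I would pass to the matched structure, use Proposition \ref{perfmatch} to see it carries a (near) perfect matching with a caterpillar contree, label it gracefully via Corollary \ref{BHLobsters} and Theorem \ref{gracefulshells}, and then expand and reflect the matched vertices exactly as in the BH scheme reviewed in Section~2.2. In this labeling the matching edges together with the edges they determine in the contree receive pairwise distinct weights, while the remaining unmatched vertices are assigned last so that they only reuse weights already present; this accounts for $2|M|=\nu$ distinct weights, in the same spirit as Proposition \ref{BHRelaxed}. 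Squeezing the exact constant $\nu$ (rather than the $\nu-1$ that a naive count of Proposition \ref{BHRelaxed} yields) is part of the bookkeeping here.

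For the bound $gs(T)\ge\frac{3}{4}m+\frac{d-\nu}{8}+\frac{3}{2}$, I would start from the right-layered edge-relaxed labeling of Theorem \ref{edge-relaxedbip}, whose deficiency is controlled level-pair by level-pair through $\theta_i$, $ex_{i+1}$ and $m(i,i+1)$ via \ref{relation} through \ref{alphalevel}. The improvement is to overwrite the bipartite labeling on the matched part of $T$ by a BH labeling there: being graceful, it destroys none of the weights that the bipartite rule is \emph{forced} to repeat at excess positions, i.e. the $\left\lceil\frac{d_i-1}{d_i}ex_{i+1}\right\rceil$ term. I would then re-express the number of unavoidable repeats in terms of $d$ and $\nu$, using the matching to bound how many excess positions survive and feeding $s(i,i+1)+ex_{i+1}+1=m(i,i+1)$ together with the lobster relations \ref{levelsum} and \ref{lobstex} into the worst case $m(i,i+1)\equiv 3\pmod 4$, $d_i=2$ of \ref{alphalevel}. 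The target is to turn the bipartite constant $-\frac{d}{4}$ in $\frac{3m-d+6}{4}$ into the claimed $+\frac{d-\nu}{8}$.

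The step I expect to be the main obstacle is the incompatibility flagged by Figure \ref{notbip}: a BH labeling is graceful but in general \emph{not} bipartite, so it cannot be glued to the bipartite level-labeling through the amalgamation Lemma \ref{amalgam}, which requires bipartite pieces sharing a $0$-labeled vertex. The delicate part is therefore to interleave a BH labeling on the matched sub-lobster with the right-layered bipartite labeling on the remainder while keeping all vertex labels injective in $\{1,\dots,m\}$ and, crucially, bounding the number of weights repeated \emph{across the interface} between the two regions. Making this interface accounting yield the clean coefficient $\frac{1}{8}$, rather than a weaker constant, is where the case analysis on $m(i,i+1)\bmod 4$ and on the parity of the matched order (as in Proposition \ref{BHRelaxed}) will have to be carried out carefully.
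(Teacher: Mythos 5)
Your overall structure (prove the two bounds separately and take the maximum) and your treatment of the second bound $gs(T)\geq\nu$ via Proposition \ref{BHRelaxed} both match the paper. The gap is in your first bound. You propose to overwrite the bipartite right-layered labeling by a BH labeling on the matched sub-lobster and then control the weights repeated across the interface between the two regions; you correctly identify (via Figure \ref{notbip}) that a BH labeling is not bipartite and so cannot be glued by Lemma \ref{amalgam}, and you then leave the resulting ``interface accounting'' unresolved. That accounting is exactly the step that would have to carry the whole argument, and you give no construction for it, so as written the first bound is not established.

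Moreover, the hybrid labeling is unnecessary: in the paper the bound $\frac{3}{4}m+\frac{d-\nu}{8}+\frac{3}{2}$ comes from the \emph{same} bipartite labeling of Theorem \ref{edge-relaxedbip}, with no BH labeling anywhere in this half of the proof. The only new ingredient is combinatorial: $T$ is its shell (of order $\nu$, carrying the $d$ spine edges and the length-$2$ branches) with $m-\nu$ amalgamated leaves, and amalgamated leaves never create repeated weights in the right-layered labeling; hence the number of level-pairs $(i,i+1)$ that can contribute any repetition at all is at most $\frac{m-d-(m-\nu)}{2}=\frac{\nu-d}{2}$. Re-summing inequality \ref{alphalevel} over only those level-pairs gives $gs(T)\geq \frac{3}{4}(m-3)-\frac{1}{4}\left(\frac{\nu-d}{2}-3\right)+3=\frac{3}{4}m+\frac{d-\nu}{8}+\frac{3}{2}$ directly. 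So the role of $\nu$ is to cap the number of ``bad'' levels in the single bipartite labeling, not to mark a region on which a graceful labeling is imposed; once you see this, the interface problem you flagged disappears because there is only one labeling in play for this bound.
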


\begin{proof}
We continue from the proof of Theorem \ref{edge-relaxedbip} with the same terminology and notation. Observe that $T$ can be viewed as a lobster shell of order $\nu$ with $m-\nu$ amalgamated leaves. From this perspective, note that the number of levels of $T$ with incident paths of length $2$ which are not on $P$ is at most $\frac{m-d-(m-\nu)}{2}=\frac{\nu-d}{2}$. Summing \ref{alphalevel} over all such $i$, we obtain 
\[gs(T)\geq \frac{3}{4}(m-3)-\frac{1}{4}\left(\frac{\nu-d}{2}-3\right)+3\]
which is the first desired bound. The second bound is just Proposition \ref{BHRelaxed}.
\end{proof}

Note: The bound from the above theorem implies the following improvement

\begin{cor}
\begin{align}
&\text{If } \nu\geq \frac{3}{4}m, \text{ then } gs(T)\geq \frac{3}{4}m\\
&\text{If } \nu< \frac{3}{4}m, \text{ then } gs(T) \geq \frac{3}{4}m \text{ for } d\geq \nu-12
\end{align}
\end{cor}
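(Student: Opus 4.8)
The plan is to read both inequalities off directly from the lower bound established in Theorem \ref{edge-relaxed}, namely $gs(T)\geq\max\{\frac{3}{4}m+\frac{d-\nu}{8}+\frac{3}{2},\nu\}$, by selecting whichever of the two terms in the maximum is favorable in each regime. No new construction is needed; the entire content is a case split on $\nu$ versus $\frac{3}{4}m$ followed by one elementary inequality.

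For the first statement, I would assume $\nu\geq\frac{3}{4}m$. Since the maximum dominates its second argument, we have $gs(T)\geq\nu$, and combining this with the hypothesis gives $gs(T)\geq\nu\geq\frac{3}{4}m$ at once. Here the matching term in the maximum (which is just Proposition \ref{BHRelaxed} repackaged) does all the work, and the diameter plays no role.

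For the second statement, I would assume $\nu<\frac{3}{4}m$ and $d\geq\nu-12$, and retain instead the first argument of the maximum, so that $gs(T)\geq\frac{3}{4}m+\frac{d-\nu}{8}+\frac{3}{2}$. It then remains only to check that the correction term $\frac{d-\nu}{8}+\frac{3}{2}$ is nonnegative under the stated constraint. Rearranging $\frac{d-\nu}{8}+\frac{3}{2}\geq 0$ gives $d-\nu\geq -12$, i.e.\ exactly $d\geq\nu-12$, whence $gs(T)\geq\frac{3}{4}m$ as claimed.

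Since both deductions are forced by the shape of the maximum, there is essentially no obstacle: the only genuine step is verifying the sign of a single linear expression, and the threshold $d\geq\nu-12$ is precisely the break-even point of that inequality. The one modeling choice worth flagging is that the hypotheses are arranged so that in each regime the selected term of the maximum is guaranteed to be at least $\frac{3}{4}m$; confirming that the two cases are exhaustive (they trivially are, since they partition on whether $\nu\geq\frac{3}{4}m$) is the only bookkeeping required.
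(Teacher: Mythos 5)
Your proposal is correct and is exactly the deduction the paper intends: the corollary is stated as an immediate consequence of Theorem \ref{edge-relaxed} (the paper gives no separate proof), obtained by picking the $\nu$ term of the maximum in the first case and checking that $\frac{d-\nu}{8}+\frac{3}{2}\geq 0$ precisely when $d\geq\nu-12$ in the second.
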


\section{Remarks}
The improvement in the gracesize bound from Theorem \ref{edge-relaxed} comes at the cost of the labeling not necessarily being bipartite. However, this is the first instance of the use of a non-bipartite labeling in such a result, which we view as the correct approach since the conjectured bound from the GTC could not come from a bipartite labeling. A promising direction could be to find values of $d_i$ that produce the minimum gracesize of $T$ simultaneously by equation \ref{alphalevelgen} and Proposition \ref{BHRelaxed}. 

  Our approach shows improvements for range-relaxed graceful labeling and edge-relaxed graceful labelings of lobsters as a step towards Bermond's conjecture \cite{Bermond} that all lobsters are graceful. However, with more careful analysis of the excess of $k$-distant trees for $k>2$, analogous statements may be possible.

\section{Acknowledgement}

We want to express our gratitude to the anonymous referees for their thorough comments and suggestions. The high quality careful checking of our work together with ideas for changes greatly improved the content and style of this paper. Thanks!

\end{document}